\numberwithin{algorithm}{section}
\newtheorem{theorem}{Theorem}[section]
\newtheorem{lemma}[theorem]{Lemma}
\newtheorem{proposition}[theorem]{Proposition}
\newtheorem{corollary}[theorem]{Corollary}
\theoremstyle{remark}
\newtheorem{remark}[theorem]{Remark}
\theoremstyle{definition}
\newtheorem{definition}[theorem]{Definition}
\newcommand{\Diag}{\operatorname{Diag}}
\renewcommand{\vec}[1]{\boldsymbol{#1}}
\renewcommand{\th}{{}^{\mathrm{th}}}
\newcommand{\st}{\operatorname{s.t.}}
\newcommand{\normtr}{|\!|\!|}
\newcommand{\indic}{\mathbf{1}}
\newcommand{\Rplusstar}{\mathbb{R}_{+}^{*}}
\newcommand{\Splusplus}{\mathbb{S}_m^{++}}
\newcommand{\NN}{\mathbb{N}}
\newcommand{\RR}{\mathbb{R}} 
\newcommand{\EE}{\mathbb{E}}
\newcommand{\trace}{\operatorname{trace}}
\newcommand{\rank}{\operatorname{rank}}
\newcommand{\card}{\operatorname{card}}
\title{Approximation of a Maximum-Submodular-Coverage problem involving spectral functions, with application to Experimental Design}
\author{Guillaume Sagnol\footnote{
Parts of this work were done when the author was with INRIA Saclay \^Ile de France \& CMAP, \'Ecole Polytechnique, being supported
by Orange Labs through the research contract CRE EB 257676 with INRIA.
}\\
{\small Zuse Institut Berlin (ZIB), Takustr. 7, 14195 Berlin, Germany}\\
{\small \url{sagnol@zib.de}}
}
\date{}
\begin{document}

\maketitle

\begin{abstract}
We study a family of combinatorial optimization problems
defined by a parameter $p\in[0,1]$, which involves spectral
functions applied to positive semidefinite matrices, and has
some application in the theory of optimal experimental design.
This family of problems tends to a generalization of the classical
maximum coverage problem as $p$ goes to $0$, and to a trivial instance
of the knapsack problem as $p$ goes to $1$.

In this article, we establish a matrix inequality which shows that the objective function is submodular for all $p\in[0,1]$, from which it follows
that the greedy approach, which has often been used for this problem, always gives a design within $1-1/e$ of the optimum. 
We next study the design found by rounding the solution of the continuous relaxed problem, an approach which has been applied by several authors.
We prove an inequality which generalizes a classical result from the theory
of optimal designs, and allows us to give a rounding procedure with an approximation
factor which tends to $1$ as $p$ goes to $1$.
\end{abstract}

\paragraph{Keyword}
Maximum Coverage, Optimal Design of Experiments, Kiefer's $p-$criterion,  Polynomial-time approximability, Rounding algorithms, Submodularity, Matrix inequalities.

\section{Introduction}
This work is motivated by a generalization of the classical maximum coverage problem which arises
in the study of optimal experimental designs. This problem may be formally defined as follows:
given $s$ positive semidefinite matrices $M_1, \ldots, M_s$ of the same size
and an integer $N<s$, solve:
\begin{align}
 \max_{I \subset [s]} &\quad \rank \Big( \sum_{i \in I } M_i \Big) \label{maxrank_intro} \tag{$P_0$}\\
 \operatorname{s.t.} &\quad \card(I) \leq N, \nonumber
\end{align}
where we use the standard notation $[s]:=\{1,\ldots,s\}$ and $\card(S)$ denotes the cardinality of $S$.
When each $M_i$ is diagonal, it is easy to see that Problem~\eqref{maxrank_intro} is
equivalent to a max-coverage instance, by defining the sets $S_i=\{k: (M_i)_{k,k}>0\}$,
so that the rank in the objective of Problem~\eqref{maxrank_intro} is equal to
$\card \big(\cup_{i \in I} S_i \big)$.

A more general class of problems arising in the study of optimal experimental designs is obtained
by considering a \emph{deformation} of the rank which is defined through a spectral function.
Given $p\in[0,1]$, solve:
\begin{align}
 \max_{\vec{n}\in\NN^s} &\quad \varphi_p \left( \vec{n} \right) \label{Pp_intro} \tag{$P_p$}\\
 \operatorname{s.t.} &\quad \sum_{i\in[s]} n_i \leq N, \nonumber
\end{align}
where $\varphi_p(\vec{n})$ is the sum of the eigenvalues of $\sum_{i\in[s]} n_i M_i$ raised
to the exponent $p$:
if the eigenvalues of the positive semidefinite matrix $\sum_{i\in[s]} n_i M_i$
are $\lambda_1,\ldots,\lambda_m$ (counted with multiplicities), $\varphi_p(\vec{n})$ is defined by
$$\varphi_p(\vec{n}) = \trace \Big(\sum_{i\in[s]} n_i M_i \Big)^p = \sum_{k=1}^m  \lambda_k^p.$$
We shall see that Problem~\eqref{maxrank_intro} is the limit of Problem~\eqref{Pp_intro}
as $p \to 0^+$ indeed. On the other hand, the limit of Problem~\eqref{Pp_intro} as
$p \to 1$ is a knapsack problem (in fact, it is the trivial instance in which the $i\th$ item
has weight $1$ and utility $u_i=\trace M_i$).
Note that a matrix $M_i$ may be chosen $n_i$ times in Problem~\eqref{Pp_intro},
while choosing a matrix more than once in Problem~\eqref{maxrank_intro} cannot increase
the rank. Therefore we also define the binary variant of Problem~\eqref{Pp_intro}:
\begin{equation}
\max_{\vec{n}}  \left\{\varphi_p \left( \vec{n} \right):\ \vec{n}\in\{0,1\}^s,\ \sum_{i\in[s]} n_i \leq N\right\}
\tag{$P_p^\textrm{bin}$} \label{Ppbin}
\end{equation}
We shall also consider the case in which the selection of the $i\th$ matrix
costs $c_i$, and a total budget $B$ is allowed. This is the budgeted version of
the problem:
\begin{equation}
\max_{\vec{n}}  \left\{\varphi_p \left( \vec{n} \right):\ \vec{n}\in\mathbb{N}^s,\ \sum_{i\in[s]} c_i n_i \leq B\right\}
\tag{$P_p^\textrm{bdg}$} \label{Ppbdg}
\end{equation}

Throughout this article, we use the term \emph{design} for
the variable $\vec{n}=(n_1,\ldots,n_s)\in\mathbb{N}^s$.
We say that $\vec{n}$ is a $N-$\emph{replicated design}
if it is feasible for Problem~\eqref{Pp_intro},
a $N-$\emph{binary design}
if $\vec{n}$ is feasible for Problem~\eqref{Ppbin},
and a $B-$\emph{budgeted design} when it satisfies the constraints of~\eqref{Ppbdg}.

\subsection{Motivation: optimal experimental design}
The theory of \emph{optimal design of experiments} plays a central role in statistics.
It studies how to best select experiments in order to estimate a set of parameters.
Under classical assumptions,
the best linear unbiased estimator is given by least square theory, and lies within confidence ellipsoids which are described
by a positive semidefinite matrix depending only on the selected experiments. The \emph{optimal design of experiments}
aims at selecting the experiments in order to make these confidence ellipsoids as small as possible, which
leads to more accurate estimators. 

A common approach consists in minimizing a
scalar function measuring these ellipsoids, 
where the function is taken from the class of $\Phi_p$-information functions
proposed by Kiefer~\cite{Kief75}. This leads to a combinatorial optimization problem
(decide how many times each experiment should be performed) involving
a spectral function which is applied to the information matrix of the experiments.
For $p\in ]0,1]$, the Kiefer's $\Phi_p$-optimal design problem is
equivalent to Problem~\eqref{Pp_intro} (up to the exponent $1/p$ in the objective function).

In fact, little attention has been given to the combinatorial aspects of Problem~\eqref{Pp_intro}
in the optimal experimental design literature.
The reason is that
there is a natural relaxation of the problem which is much more tractable and
usually yields very good results: instead of
determining the exact number of times $n_i$ that each experiment will be selected,
the optimization is done over the fractions $w_i=n_i/N\in[0,1]$,
which reduces the problem to the maximization of a concave function over a convex set
(this is the theory of \textit{approximate optimal designs}).
For the common case, in which the number $N$ of experiments to perform is large
and $N>s$ (where $s$ is the number of available experiments), this approach
is justified by a result of Pukelsheim and Rieder~\cite{PR92},
who give a rounding procedure to transform an optimal approximate design
$\vec{w^*}$ into an
$N-$replicated design $\vec{n}=(n_1,\ldots,n_s)$ which approximates the optimum
of the Kiefer's $\Phi_p-$optimal
design problem within a factor $1-\frac{s}{N}$.

\sloppy{The present developments were motivated by a joint work with Bouhtou
and Gaubert~\cite{BGSagnol08Rio,SagnolGB10ITC} on the application of optimal
experimental design methods to the identification of the traffic in an Internet
backbone.} This problem describes an \textit{underinstrumented situation}, in which
a small number $N<s$ of experiments should be selected. In this case,
the combinatorial aspects of Problem~\eqref{Pp_intro} become crucial.
A similar problem was studied by Song, Qiu and Zhang~\cite{SQZ06}, who proposed to
use a greedy algorithm to approximate the solution of Problem~\eqref{Pp_intro}.
In this paper, we give an approximation bound which justifies this approach. 
Another question addressed in this manuscript is whether it is
appropriate to take roundings of (continuous) approximate designs in the
underinstrumented situation (recall that this is the common approach
when dealing with experimental design problems in the \emph{overinstrumented} case,
where the number $N$ of experiments is large when compared to $s$).

Appendix~\ref{sec:problemStatement} is devoted to the application
to the theory of optimal experimental designs; we explain how
a statistical problem (choose which experiments to conduct in order
to estimate a set of parameters) leads to the study
of Problem~\eqref{Pp_intro},
with a particular focus to the \emph{underinstrumented} situation
described above.
For more details on the subject, the reader is referred to the monographs of Fedorov~\cite{Fed72} and Pukelsheim~\cite{Puk93}.

\subsection{Organisation and contribution of this article}
~\par

\begin{table}[ht!]
{\small
\begin{center}
 \begin{tabular}{lll}
  Algorithm & Approximation factor for Problem~\eqref{Pp_intro} & Reference\\
\hline
& \vspace{-4mm} &\\
  Greedy & $1-e^{-1} \qquad$ (or $1-(1-\frac{1}{N})^N$) &
  \ref{coro:1m1se} (\cite{NWF78})\vspace{3mm}\\
Any $N-$replicated design $\vec{n}$&
\multirow{2}{*}{$\frac{1}{N} \sum_{i=1}^s n_i^p (w_i^*)^{1-p}$} &
\multirow{2}{*}{\ref{prop:boundW_n}}\\
(posterior bound) & & \vspace{3mm}\\

\hspace{-2mm}\begin{tabular}{l}Rounding~\ref{algo:greedyrounding} \\ (prior bound) \end{tabular} &
{\footnotesize $\left\{\begin{array}{cl}
           \left(\frac{N}{s} \right)^{1-p} & \quad \textrm{if } \left(\frac{N}{s} \right)^{1-p} \leq \frac{1}{2-p} ; \\
     1-\frac{s}{N}(1-p)\left(\frac{1}{2-p}\right)^{\frac{2-p}{1-p}} & \quad
    \textrm{Otherwise} \end{array} \right.$}
 & \ref{theo:factorF} \vspace{3mm}\\
Apportionment rounding & $(1-\frac{s}{N})^p \qquad$ if $N\geq s$ & \cite{PR92}\vspace{1cm}\\

  Algorithm & Approximation factor for Problem~\eqref{Ppbin} & Reference\\
\hline
& \vspace{-4mm} &\\
Greedy & $1-e^{-1} \qquad $ (or $1-(1-\frac{1}{N})^N$) & \ref{coro:1m1se} (\cite{NWF78}) \vspace{3mm}\\
 Any $N-$binary design $\vec{n}$ &
\multirow{2}{*}{$\frac{1}{N}\sum_{i=1}^s n_i (w_{i}^*)^{1-p}$} &
\multirow{2}{*}{\ref{prop:boundW}}\\
(posterior bound)& & \vspace{3mm}\\
\hspace{-2mm}\begin{tabular}{l}Keep the $N$ largest coord. \\ of $\vec{w^*}$ (prior bound) \end{tabular}
& $ \left( \frac{N}{s} \right)^{1-p}$ \qquad if $p\leq 1-\frac{\ln N}{\ln s}$ &
 \ref{theo:factorFbin} \vspace{1cm} \\

Algorithm & Approximation factor for Problem~\eqref{Ppbdg} & Reference\\
\hline
& \vspace{-4mm} &\\
Adapted Greedy & $1-e^{-\beta}\simeq 0.35$ (where $e^\beta=2-\beta$) & \ref{rem:budg}(\cite{Wol82}) \vspace{3mm}\\
Greedy+triples enumeration & $1-e^{-1} \qquad $ & \ref{rem:budg}(\cite{Svi04}) \vspace{3mm}\\
Any $B-$budgeted design $\vec{n}$ &
\multirow{2}{*}{$\frac{1}{B}\sum_{i=1}^N c_i n_i^p (w_{i}^*)^{1-p}$} &
\multirow{2}{*}{\ref{prop:boundW_bdg}}\\
(posterior bound) & & \vspace{0mm}
 \end{tabular}
\end{center}
}
\caption{Summary of the approximation bounds obtained in this paper, as
well as the bound of Pukelsheim and Rieder~\cite{PR92}.
The column ``Reference''
indicates the number of the theorem, proposition or remark where the bound is proved
(a citation in parenthesis means a direct application of a
result of the cited paper, which is possible thanks to the
submodularity of $\varphi_p$ proved in Corollary~\ref{coro:SubmodExample}).
In the table, 
\emph{posterior} denotes a bound which depends on the continuous solution
$\vec{w^*}$ of the relaxed problem, while a \emph{prior bound} depends only on
the parameters of the problem.
\label{tab:factors}}
\end{table}

The objective of this article is to study some approximation algorithms for the class
of problems~\eqref{Pp_intro}${}_{p \in [0,1]}$. Several results presented in this article were
already announced in the companion papers~\cite{BGSagnol08Rio,BGSagnol10ENDM}, without
the proofs. This paper provides all the proofs of the results of~\cite{BGSagnol10ENDM}
and gives new results for the rounding algorithms.
We shall now present the contribution
and the organisation of this article.

In Section~\ref{sec:submod}, we
establish a matrix inequality (Proposition~\ref{prop:ineqPropos})
which shows that a class of spectral functions is submodular (Corollary~\ref{coro:Submod}).
As a particular case of the latter result, the objective function
of Problem~\eqref{Pp_intro} is submodular for all $p\in[0,1]$.
The submodularity of this class of spectral functions is an original contribution of
this article for $0<p<1$, however a particular case of this result was announced --without a proof-- in the companion paper on the telecom application~\cite{BGSagnol08Rio}.
In the limit case $p=0$, we obtain two functions
which were already known to be submodular (the rank and the log of determinant
of a sum of matrices).

Due to a celebrated result of Nemhauser, Wolsey
and Fisher~\cite{NWF78}, the submodularity of the criterion implies that the greedy approach,
which has often been used for this problem,
always gives a design within $1-e^{-1}$ of the optimum (Theorem~\ref{coro:1m1se}).
We point out that the submodularity of the determinant criterion was noticed earlier
in the optimal experimental design literature,
but under an alternative form~\cite{RS89}: Robertazzi and Schwartz showed that the determinant
of the inverse of a sum of matrices is supermodular, and they used it
to write an algorithm for the construction of approximate
designs (i.e. without integer variables)
which is based on the accelerated greedy algorithm of Minoux~\cite{Min78}.
In contrast, the originality of the present paper
is to show
that a whole class of criteria satisfies the submodularity property,
and to study the consequences in
terms of approximability of a combinatorial optimization problem.

In Section~\ref{sec:rounding}, we investigate the legitimacy of using rounding algorithms
to construct a $N-$replicated design $\vec{n}=(n_1,\ldots,n_s)\in\mathbb{N}^s$
or a $N$-binary design $\vec{n}\in\{0,1\}^s$
from an optimal approximate design $\vec{w^*}$,
i.e. a solution of a continuous relaxation of Problem~\eqref{Pp_intro}. 
We establish an inequality (Propositions~\ref{prop:boundW} and~\ref{prop:boundW_n})
which bounds from below the approximation ratio of any integer design, by a function
which depends on the continuous solution $\vec{w^*}$. Interestingly, this
inequality generalizes a classical result from the theory of optimal designs
(the upper bound on the weights of a D-optimal design~\cite{Puk80,HT09}
is a particular case ($p=0$) of Proposition~\ref{prop:boundW}).
The proof of this result is presented in Appendix~\ref{sec:proofIneq} ; it relies on
matrix inequalities and several
properties of the differentiation of a scalar function applied to symmetric matrices.
Then we point out that the latter lower bound can be maximized by an incremental
algorithm which is well known in the resource allocation community
(Algorithm~\ref{algo:greedyrounding}),
and we derive approximation bounds
for Problems~\eqref{Pp_intro} and~\eqref{Ppbin} which do not depend on
$\vec{w^*}$ (Theorems~\ref{theo:factorFbin} and~\ref{theo:factorF}).
For the problem with replicated designs~\eqref{Pp_intro}, the approximation
factor is an increasing function of $p$ which tends to $1$ as $p\to1$.
In many cases, the
approximation guarantee for designs obtained by rounding is better than
the greedy approximation factor $1-e^{-1}$.

We have summarized in Table~\ref{tab:factors} the approximation
results proved in this paper (this table also includes another
known approximability result for Problem~\eqref{Pp_intro}, 
the \emph{efficient apportionment rounding} of 
Pukelsheim and Rieder~\cite{PR92}).

\section{Submodularity and Greedy approach} \label{sec:submod}
In this section, we study the greedy algorithm for solving Problems~\eqref{Pp_intro}
and~\eqref{Ppbin}
through the submodularity of $\varphi_p$.
We first recall a result presented in~\cite{BGSagnol08Rio}, which states
that the \textit{rank optimization} problem is NP-hard, by a reduction from the \emph{Maximum Coverage}
problem. It follows that for all positive $\varepsilon$, there is no polynomial-time
algorithm which approximates~\eqref{maxrank_intro} by a factor of $1-\frac{1}{e}+\varepsilon$ unless $P=NP$
(this has been proved by Feige for the Maximum Coverage problem~\cite{Fei98}).
Nevertheless, we show that this bound
is the worst possible ever, and that the greedy algorithm always
attains it.

To this end, we show that a class of spectral functions (which includes 
the objective function of Problem~\eqref{Pp_intro}) is \emph{nondecreasing submodular}.
The maximization of submodular functions over a matroid has been
extensively studied~\cite{NWF78,CC84,CCPV07,Von08,KST09}, 
and we shall use known approximability results.

To study its approximability, we can think of Problem~\eqref{Pp_intro} as the maximization of a set function
$\varphi_p':2^E \mapsto \mathbb{R}^+$. To this end, note that each design
$\vec{n}$ can be seen as a subset of $E$, where
$E$ is a pool which contains $N$ copies of each experiment (this allows
us to deal with replicated designs, i.e.\ with experiments that are conducted several times;
if replication is not allowed (Problem~\eqref{Ppbin}), we simply set $E:=[s]$).
Now, if $S$ is a subset of $E$ corresponding to the design $\vec{n}$, we 
define $\varphi_p'(S):=\varphi_p(\vec{n})$. In the sequel, we identify the set function
$\varphi_p'$ with $\varphi_p$ (i.e., we omit the \emph{prime}).

We also point out that multiplicative approximation factors for the $\Phi_p-$optimal problem cannot be considered
when $p\leq0$, since the criterion is identically $0$ as
long as the the information matrix is singular. For $p\leq0$ indeed, the instances of the $\Phi_p$-optimal problem where no feasible
design lets $M_F(\vec{n})$ be of full rank have an optimal value of $0$. For all the other instances, any
polynomial-time algorithm
with a positive approximation factor would necessarily return a design of full rank. Provided that $P\neq NP$, this would contradict the
NP-hardness of
\emph{Set-Cover} (it is easy to see that \emph{Set Cover} reduces to the problem
of deciding whether there exists a set $S$ of cardinal $N$ such that $\sum_{i\in S} M_i$
has full rank for some diagonal matrices $M_i$, by a similar argument to the one given in the first paragraph of this article).
Hence, we investigate approximation algorithms only in the case $p \in [0,1]$.

\subsection{A class of submodular spectral functions}

In this section, we are going to show that a class of spectral functions is submodular.
We recall that a real valued function $F:2^E \rightarrow \mathbb{R}$, defined on every subset of $E$ is called nondecreasing
if for all subsets $I$ and $J$ of $E$,\ $I \subseteq J$ implies  $F(I)\leq F(J)$. We also give
the definition of a \emph{submodular} function: 

\begin{definition}[Submodularity] A real valued set function $F$ : $2^E \longrightarrow \mathbb{R}$
is \emph{submodular} if it satisfies the following condition :
$$F(I)+F(J)\geq F(I\cup J)+F(I\cap J) \quad \textrm{for all}\quad I,J\subseteq E.$$
\end{definition}

We next recall the definition of operator monotone functions. The latter are
real valued functions applied to hermitian matrices: if $A=U\operatorname{Diag}(\lambda_1,\ldots,\lambda_m)U^*$ is
a $m\times m$ hermitian matrix (where $U$ is unitary and
$U^*$ is the conjugate of $U$), the matrix
$f(A)$ is defined as $U\operatorname{Diag}(f(\lambda_1),\ldots,f(\lambda_m))U^*$.

\begin{definition}[Operator monotonicity] 
A real valued function $f$ is \emph{operator monotone}
on $\RR_+$ (resp.\ $\RR_+^*$) if for every pair of positive semidefinite (resp.\ positive definite) matrices $A$ and $B$,
$$A\preceq B \Longrightarrow f(A)\preceq f(B).$$
We say that $f$ is \emph{operator antitone} if $-f$ is operator monotone.
\end{definition}

The next proposition is a matrix inequality of independent interest; it will be useful to show that $\varphi_p$ is submodular.
Interestingly, it can be seen as
an extension of the Ando-Zhan Theorem~\cite{AZ99}, which reads as follows: \textit{Let $A$, $B$ be semidefinite positive matrices. For any unitarily invariant norm
$\normtr \cdot \normtr$, and for every non-negative operator monotone function $f$ on $[0,\infty)$,
$$ \normtr f(A+B) \normtr \leq \normtr f(A)+f(B) \normtr.$$}
Kosem~\cite{Kos06} asked whether it is possible to extend this inequality as follows:
$$ \normtr f(A+B+C) \normtr \leq \normtr f(A+B)+f(B+C)-f(C) \normtr,$$
and gave a counterexample involving the trace norm and the function $f(x)=\frac{x}{x+1}$.
However, we show in next proposition that the previous inequality holds for the trace norm and
every primitive $f$ of an operator antitone function (in particular,
for $f(x)=x^p,\ p\in]0,1]$). Note that the previous inequality is not true 
for any unitarily invariant norm and $f(x)=x^p$ either. It is easy to find counterexamples with the spectral radius norm.

\begin{proposition} \label{prop:ineqPropos} Let $f$ be a real function defined on $\RR_+$ and differentiable on $\RR_+^*$.
If $f'$ is operator antitone on $\RR_+^*$, then for all triples $(X,Y,Z)$ of $m\times m$ positive semidefinite matrices,
\begin{align}
 \mathrm{trace}\ f(X+Y+Z)+ \mathrm{trace}\ f(Z) \leq \mathrm{trace}\ f(X+Z) + \mathrm{trace}\ f(Y+Z). \label{ineqxyz}
\end{align}
\end{proposition}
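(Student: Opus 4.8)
The plan is to rewrite the claimed inequality as a comparison of two \emph{increments} of the map $g(W):=\trace f(W)$ on the cone of positive semidefinite matrices, and then to exploit the operator antitonicity of $f'$ pointwise along a linear path. Rearranging~\eqref{ineqxyz}, the statement is equivalent to
$$g(X+Y+Z)-g(Y+Z)\ \leq\ g(X+Z)-g(Z),$$
that is, the increment obtained by adding $X$ is smaller when the base point is the larger matrix $Y+Z$ than when it is $Z$. This is exactly the ``diminishing returns'' behaviour one expects from an operator-concave $f$, and it is what I want to extract from the hypothesis that $f'$ is operator antitone.

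First I would reduce to the positive definite case through the regularization $Z\mapsto Z+\varepsilon I$ with $\varepsilon>0$: then $Z+\varepsilon I+tX\succ 0$ for every $t\in[0,1]$ and every $X\succeq 0$, so $f'$ is defined all along the relevant segments and the differentiation formulas apply; the general semidefinite case follows by letting $\varepsilon\to 0^+$ and using the continuity of $g$ on the closed cone. For fixed $\varepsilon$, I would express each increment as a line integral along $t\mapsto(\cdot)+tX$, using the standard identity $\frac{d}{dt}\trace f(A(t))=\trace\!\big(f'(A(t))\,\dot A(t)\big)$ for a smooth symmetric path $A(t)$. This yields
$$g(X+Z)-g(Z)=\int_0^1 \trace\big(f'(Z+tX)\,X\big)\,dt,$$
and the analogous formula for the left-hand increment with $Z$ replaced by $Y+Z$. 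It therefore suffices to prove, for each fixed $t\in[0,1]$, the pointwise inequality $\trace\big(f'(Y+Z+tX)\,X\big)\leq \trace\big(f'(Z+tX)\,X\big)$.

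For the pointwise step, set $A:=Z+tX$, so that $A\preceq A+Y=Y+Z+tX$ because $Y\succeq 0$. Since $f'$ is operator antitone on $\RR_+^*$, this ordering gives $f'(Y+Z+tX)\preceq f'(Z+tX)$, hence $P:=f'(Z+tX)-f'(Y+Z+tX)\succeq 0$. As $X\succeq 0$ as well, the trace of the product of two positive semidefinite matrices is nonnegative, $\trace(PX)=\trace\big(P^{1/2}XP^{1/2}\big)\geq 0$, which is precisely the pointwise estimate needed. Integrating over $t\in[0,1]$ and then removing the regularization completes the proof.

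I expect the main obstacle to be analytic rather than algebraic: rigorously justifying the trace-derivative identity and the interchange of limit and integral when the matrices are only semidefinite, since $f'$ may blow up near $0$ and the path can touch the boundary of the cone. This is exactly where the regularization $Z+\varepsilon I$ and the properties of differentiation of spectral functions (to be recalled in the appendix) are needed; the operator-monotonicity hypothesis itself enters only through the clean one-line pointwise estimate $\trace(PX)\geq 0$ above.
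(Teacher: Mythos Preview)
Your proof is correct and follows essentially the same route as the paper's: reduce to the positive definite case by perturbation, use the identity $\frac{d}{dt}\trace f(A(t))=\trace\!\big(f'(A(t))\dot A(t)\big)$ (the paper invokes L\"owner's theorem to justify analyticity of $f'$ here), and conclude via the one-line estimate $\trace(PX)\geq 0$ with $P\succeq 0$ coming from the operator antitonicity of $f'$. The only cosmetic difference is the choice of integration variable---the paper fixes $X$, defines $\psi(T)=\trace f(X+T)-\trace f(T)$, and shows $\psi$ decreases along $s\mapsto Z+sY$, whereas you fix $Y$ and integrate both increments along $t\mapsto(\cdot)+tX$; the underlying inequality is identical.
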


\begin{proof}
Since the eigenvalues of a matrix are continuous functions of its entries,
and since $\mathbb{S}_m^{++}$ is dense in $\mathbb{S}_m^{+}$,
it suffices to establish the inequality when $X$, $Y$, and $Z$ are 
positive definite. Let $X$ be an arbitrary positive definite matrix. We consider the map:
\begin{align}
\psi :  \mathbb{S}_m^{+} & \longrightarrow \mathbb{R} \nonumber \\
T & \longmapsto \mathrm{trace}\ f(X+T) - \mathrm{trace}\ f(T). \nonumber
\end{align}
The inequality to be proved can be rewritten as
$$\psi(Y+Z)\leq \psi(Z).$$
We will prove this by showing that $\psi$ is nonincreasing with respect to the L\"owner ordering in
the direction generated by any positive semidefinite matrix. To this end, we compute the Frechet derivative
of $\psi$ at $T \in \mathbb{S}_m^{++}$ in the direction of an arbitrary matrix $H \in \mathbb{S}_m^{+}$. By definition,
\begin{equation}
D\psi(T)(H)=\lim_{\epsilon\rightarrow 0} \frac{1}{\epsilon} \big( \psi(T+\epsilon H)-\psi(T) \big). \label{Dfrechet}
\end{equation}
When $f$ is an analytic function, $X \longmapsto \mathrm{trace}\ f(X)$ is Frechet-differentiable,
and an explicit form of the derivative is known~(see~\cite{HP95,JB06}):
$D\big(\mathrm{trace}\ f(A)\big)(B)=\mathrm{trace} \big(f'(A)B\big)$.
Since $f'$ is operator antitone on $\RR_+^*$, a famous result of L\"owner~\cite{Low34}
tells us (in particular) that $f'$ is analytic at all points of the positive real axis,
and the same holds for $f$.
Provided that the matrix $T$ is positive definite (and hence $X+T$), we have
$$D\psi(T)(H)=\mathrm{trace} \Big(\   \big(f'(X+T)-f'(T)\big) H \Big).$$
By antitonicity of $f'$ we know that the matrix $W=f'(X+T)-f'(T)$ is negative semidefinite.
For a matrix $H\succeq0$, we have therefore:
\begin{align*}
 D\psi(T)(H) & =\trace\ (WH) \leq 0.
\end{align*}
Consider now $h(s):=\psi(sY+Z)$. For all $s \in [0,1]$, we have $$h'(s)=D\psi(sY+Z)(Y)\leq 0,$$ and so, $h(1)=\psi(Y+Z)\leq h(0)=\psi(Z)$, 
from which the desired inequality follows.
\end{proof}

\begin{corollary} \label{coro:Submod}
Let $M_1,\ldots,M_s$ be $m\times m$ positive semidefinite matrices.
If $f$ satisfies the assumptions of Proposition~\ref{prop:ineqPropos},
then the set function
$F: 2^{[s]} \rightarrow \mathbb{R}$ defined by
$$\forall I \subset [s],\ F(I)=\trace\ f(\sum_{i\in I} M_i),$$
is submodular.
\end{corollary}

\begin{proof}

Let $I,J \subseteq 2^{[s]}$. We define 
\begin{align} X=\sum_{i \in I\setminus J} M_{i},\ Y=\sum_{i\in J\setminus I} M_{i},\ Z=\sum_{i\in I\cap J} M_{i}. \nonumber
\end{align}
It is easy to check that
\begin{align} 
F(I)&=\textrm{trace}\ f(X+Z), \nonumber\\
F(J)&=\textrm{trace}\ f(Y+Z), \nonumber\\
F(I\cap J)&=\textrm{trace}\ f(Z), \nonumber\\
F(I\cup J)&=\textrm{trace}\ f(X+Y+Z). \nonumber
\end{align}
Hence, Proposition~\ref{prop:ineqPropos}  proves the submodularity of $F$.\end{proof}

A consequence of the previous result is that the objective function
of Problem~\eqref{Pp_intro} is submodular. In the limit case $p\to 0^+$,
we find two well-known submodular functions:
\begin{corollary} \label{coro:SubmodExample}
 Let $M_1,...,M_s$ be $m\times m$ positive semidefinite matrices.
\begin{itemize}
 \item[(i)] $\forall p\in]0,1], I \mapsto \trace (\sum_{i\in I}M_i)^p$ is submodular.
\item[(ii)] $I \mapsto \rank (\sum_{i\in I}M_i)$ is submodular.
\end{itemize}
If moreover every $M_i$ is positive definite, then:
\begin{itemize}
\item[(iii)] $I \mapsto \log\det (\sum_{i\in I}M_i)$ is submodular.
\end{itemize}
\end{corollary}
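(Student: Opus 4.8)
The plan is to derive all three items from Corollary~\ref{coro:Submod} by selecting the appropriate scalar function $f$, supplemented by a limiting argument for the boundary case $p=0$. The only new ingredient needed beyond Corollary~\ref{coro:Submod} is the identification, in each case, of the relevant $f$ and the verification that $f'$ is operator antitone on $\RR_+^*$; the $\log\det$ case requires extra care because the natural $f$ is not defined at the origin.

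For item~(i) I would apply Corollary~\ref{coro:Submod} with $f(x)=x^p$. This $f$ is defined on $\RR_+$ (with $f(0)=0$) and differentiable on $\RR_+^*$, with derivative $f'(x)=p\,x^{p-1}$, so it only remains to check that $f'$ is operator antitone on $\RR_+^*$. Since $p\in]0,1]$ the exponent satisfies $p-1\in[-1,0]$, and it is a classical consequence of L\"owner's theory~\cite{Low34} that $x\mapsto x^q$ is operator antitone on $\RR_+^*$ for every $q\in[-1,0]$ (the endpoint $q=-1$ being matrix inversion, which reverses the L\"owner order). Multiplication by the positive constant $p$ preserves antitonicity, so the hypotheses of Corollary~\ref{coro:Submod} are met and $I\mapsto\trace(\sum_{i\in I}M_i)^p$ is submodular.

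For item~(ii) I would not invoke the corollary directly, since the function $x\mapsto\mathbf{1}_{\{x>0\}}$ whose trace produces the rank is neither continuous nor differentiable at $0$; instead I would pass to the limit $p\to0^+$ in item~(i). For any positive semidefinite $A$ with eigenvalues $\lambda_1,\dots,\lambda_m\geq0$ one has $\trace A^p=\sum_k\lambda_k^p\to\#\{k:\lambda_k>0\}=\rank A$ as $p\to0^+$, so the submodular functions $I\mapsto\trace(\sum_{i\in I}M_i)^p$ converge pointwise to $I\mapsto\rank(\sum_{i\in I}M_i)$. The submodularity inequality $F(I)+F(J)\geq F(I\cup J)+F(I\cap J)$ is stable under pointwise limits, so the rank is submodular.

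The $\log\det$ case in item~(iii) is where I expect the main obstacle. One would like to take $f(x)=\log x$, whose derivative $f'(x)=x^{-1}$ is operator antitone on $\RR_+^*$; the difficulty is that $\log$ diverges to $-\infty$ at $0$, so Corollary~\ref{coro:Submod} does not apply verbatim. This is exactly why positive definiteness of the $M_i$ is assumed: for every nonempty $I$ the matrix $\sum_{i\in I}M_i$ is then positive definite and $\log\det$ is finite. Re-reading the proof of Proposition~\ref{prop:ineqPropos}, the Fr\'echet-derivative computation only ever evaluates $f'$ at positive definite matrices, so it carries over unchanged for $f=\log$ as soon as the matrices $X=\sum_{i\in I\setminus J}M_i$ and $Z=\sum_{i\in I\cap J}M_i$ of Corollary~\ref{coro:Submod} are positive definite. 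I would therefore dispose of the degenerate configurations first: if $I\subseteq J$ or $J\subseteq I$ the submodularity inequality is a trivial equality, and if $I\cap J=\emptyset$ then $\log\det(\sum_{i\in I\cap J}M_i)=-\infty$ and the inequality holds trivially; in the remaining case $I\cap J\neq\emptyset$, $I\setminus J\neq\emptyset$ and $J\setminus I\neq\emptyset$ force $X,Y,Z\succ0$, and Proposition~\ref{prop:ineqPropos} applies directly. Alternatively, one can avoid re-examining the proof altogether by writing $\log\det A=\lim_{p\to0^+}\frac{1}{p}\big(\trace A^p-m\big)$ for positive definite $A$ and invoking item~(i) together with the stability of submodularity under positive scaling, constant shifts, and pointwise limits.
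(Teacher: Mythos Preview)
Your proposal is correct and follows essentially the same approach as the paper: item~(i) via Corollary~\ref{coro:Submod} with $f(x)=x^p$ and operator antitonicity of $x\mapsto x^{p-1}$, and items~(ii) and~(iii) as pointwise limits of~(i) as $p\to0^+$ (for $\log\det$, via the expansion $\trace A^p = m + p\log\det A + O(p^2)$). Your additional route for~(iii), revisiting Proposition~\ref{prop:ineqPropos} directly for $f=\log$ after disposing of the degenerate configurations, is a valid alternative that the paper does not spell out.
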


\begin{proof}
It is known that $x\mapsto x^q$ is operator antitone
on $\mathbb{R}_+^*$ for all $q\in[-1,0[$. Therefore, the
derivative of the function $x\mapsto x^p$ (which is $px^{p-1}$), is operator antitone on $\mathbb{R}_+^*$ for all $p\in]0,1[$.
This proves the point $(i)$ for $p\neq 1$. The case $p=1$ is trivial, by linearity of the trace.

The submodularity of the rank $(ii)$ and of $\log\det$ $(iii)$ are classic. 
Interestingly, they are obtained
as the limit case of $(i)$ as $p\rightarrow0^+$. (For $\log\det$, we must consider the second term in the asymptotic
development of $X \mapsto \trace\ X^p$ as $p$ tends to $0^+$, cf.\ Equation~\eqref{expansion1}).
\end{proof}

\subsection{Greedy approximation}

We next present some consequences of the submodularity of
$\varphi_p$ for the approximability of Problem~\eqref{Pp_intro}.
Note that the
results of this section hold in particular for $p=0$,
and hence for the \textit{rank maximization}
problem~\eqref{maxrank_intro}. They also hold for $E=[s]$, i.e.\ for Problem~\eqref{Ppbin}.
We recall that the principle of the greedy algorithm
is to start from $\mathcal{G}_0=\emptyset$
and to construct sequentially the sets 
$$\mathcal{G}_{k+1}:=\mathcal{G}_{k} \cup \displaystyle{\operatorname{argmax}_{i \in E\setminus\mathcal{G}_k}}\ \varphi_p(\mathcal{G}_k \cup \{i\}),$$
until $k=N$.

\begin{theorem}[Approximability of Problem~\eqref{Pp_intro}] \label{coro:1m1se}
Let $p\in[0,1]$. The greedy algorithm always yields a
solution within a factor $1-\frac{1}{e}$  of the optimum of Problem~\eqref{Pp_intro}.
\end{theorem}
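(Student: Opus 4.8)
The plan is to invoke the classical theorem of Nemhauser, Wolsey and Fisher~\cite{NWF78}, which guarantees that the greedy algorithm achieves a $(1-1/e)$ approximation factor when maximizing a \emph{nondecreasing submodular} set function subject to a cardinality (uniform matroid) constraint. To do this I must verify that the hypotheses of that result are met by $\varphi_p$ under the set-function reformulation described earlier in the excerpt, where a design $\vec{n}$ is identified with a subset $S$ of the ground set $E$ (the pool of $N$ copies of each experiment, or simply $E=[s]$ in the binary case). The cardinality constraint $\sum_i n_i \leq N$ becomes exactly $\card(S)\leq N$, which is the uniform matroid of rank $N$, so the constraint structure is precisely the one handled by~\cite{NWF78}.

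The two properties I need to check are submodularity and monotonicity. Submodularity is already established: Corollary~\ref{coro:SubmodExample}$(i)$ shows that $I\mapsto \trace(\sum_{i\in I}M_i)^p$ is submodular for every $p\in]0,1]$, and the case $p=0$ (the rank) is covered by part $(ii)$ of the same corollary. I would simply cite these. The remaining point is that $\varphi_p$ is \emph{nondecreasing}: if $I\subseteq J$ then $\sum_{i\in I}M_i \preceq \sum_{i\in J}M_i$ in the L\"owner order, since the matrices are positive semidefinite and adding more of them only increases the sum. Because $x\mapsto x^p$ is operator monotone on $\RR_+$ for $p\in[0,1]$ (a standard fact, the Löwner--Heinz inequality), we get $(\sum_{i\in I}M_i)^p \preceq (\sum_{i\in J}M_i)^p$, and taking the trace (which preserves the L\"owner ordering) yields $\varphi_p(I)\leq\varphi_p(J)$. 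This confirms monotonicity.

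With both hypotheses verified, the theorem of~\cite{NWF78} applies directly and gives the bound $1-1/e$; in fact their sharper analysis gives $1-(1-\tfrac1N)^N$, which explains the parenthetical stronger factor stated in Table~\ref{tab:factors}. I would note that the argument is uniform in $p\in[0,1]$ and applies verbatim to Problem~\eqref{Ppbin} (with $E=[s]$), since the only structural ingredient is the uniform matroid constraint.

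I do not anticipate a genuine obstacle, as the heavy lifting---the matrix inequality of Proposition~\ref{prop:ineqPropos} and the resulting submodularity---has already been done. The one point requiring a moment of care is the monotonicity in the \emph{replicated} setting: when a matrix $M_i$ may be selected several times, one must make sure the set-function reformulation on the pool $E$ of copies still yields a monotone function, but this is immediate because enlarging $S$ corresponds to adding further (copies of) matrices to the sum, which can only increase it in the L\"owner order. Hence the proof reduces to assembling the monotonicity check together with the already-proven submodularity and citing~\cite{NWF78}.
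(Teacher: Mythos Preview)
Your proposal is correct and follows essentially the same approach as the paper's own proof: cite Corollary~\ref{coro:SubmodExample} for submodularity, establish monotonicity via the operator monotonicity of $x\mapsto x^p$ on $\RR_+$ (L\"owner--Heinz), and then invoke Nemhauser--Wolsey--Fisher~\cite{NWF78}. The only detail the paper adds that you leave implicit is the normalization $\varphi_p(\emptyset)=0$, which is trivially satisfied here.
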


\begin{proof} We know from Corollary~\ref{coro:SubmodExample} that for all $p\in[0,1]$, $\varphi_p$
is submodular ($p=0$ corresponding to the rank maximization problem).
In addition, the function $\varphi_p$ is nondecreasing, because $X \longrightarrow X^p$ is a matrix
monotone function for $p \in [0,1]$ (see e.g.~\cite{Zhan02}) and $\varphi_p(\emptyset)=0$.

Nemhauser, Wolsey and Fisher~\cite{NWF78} proved the result of this theorem for any nondecreasing
submodular function $f$ satisfying $f(\emptyset)=0$ which is maximized over a uniform matroid.
Moreover when the maximal number of matrices which can be selected is $N$, this approximability ratio can
be improved to $1-\big(1-1/N\big)^N.$
 \end{proof}

\begin{remark}
One can obtain a better bound by considering the \textit{total curvature} of a given instance,
which is defined by:
$$c=\max_{i \in [s]}\quad 1-\frac{\varphi_p\big(E)-\varphi_p\big(E\setminus\{i\}\big)}{\varphi_p\big(\{i\}\big)} \in [0,1].$$
Conforti and Cornuejols~\cite{CC84} proved that the greedy algorithm always achieves
a \mbox{$\frac{1}{c}\big(1-(1-\frac{c}{N})^N \big)$-approximation} factor for
the maximization of an arbitrary nondecreasing submodular function with total curvature $c$.
In particular, since $\varphi_1$ is additive
it follows that the total curvature for $p=1$ is $c=0$, yielding an approximation factor of $1$:
$$\lim_{c \rightarrow 0^+} \frac{1}{c}\big(1-(1-\frac{c}{N})^N \big)=1.$$ 
As a consequence, the greedy algorithm always gives the optimal solution of the problem. Note that Problem~$(P_1)$ is
nothing but a \emph{knapsack} problem, for which it is well known that the greedy algorithm is optimal
if each available item has the same weight.
However, it is not possible to give an upper bound on the total curvature $c$ for
other values of $p \in [0,1[$, and $c$ has to be computed for each instance.
\end{remark}

\begin{remark} \label{rem:budg}
The problem of maximizing a nondecreasing submodular function
subject to a budget constraint of the form
$\sum_i c_i n_i \leq B$, where $c_i\geq 0$
is the cost for
selecting the element $i$ and $B$ is the total allowed budget,
has been studied by several authors.
Wolsey presented an adapted greedy algorithm~\cite{Wol82} with
a proven approximation guarantee of $1-e^{-\beta}\simeq0.35$,
where $\beta$ is the unique root of the equation $e^x=2-x$.
More recently, Sviridenko~\cite{Svi04} showed that the budgeted
submodular maximization problem was still \mbox{$1-1/e-$approximable}
in polynomial time,
with the help of an algorithm which associates the greedy with a
partial  enumeration of every solution of cardinality $3$.
\end{remark}

%
%

We have attained so far an approximation factor of $1-e^{-1}$ for all $p\in [0,1[$, while
we have a guarantee of optimality of the greedy algorithm for $p=1$. This leaves a
feeling of mathematical dissatisfaction, since intuitively the problem should be easy when $p$
is very close to $1$. In the next section we remedy to this problem, by giving a rounding algorithm
with an approximation factor $F(p)$ which depends on $p$, and such that $p\mapsto F(p)$ is continuous, nondecreasing and
$\lim_{p\to 1} F(p)=1$.

\section{Approximation by rounding algorithms} \label{sec:rounding}

The optimal design problem has a natural continuous relaxation which is simply
obtained by removing the integer constraint on the design variable $\vec{n}$,
and has been extensively studied~\cite{Atw73,DPZ08,Yu10a,Sagnol09SOCP}.
As mentioned in the introduction, several authors proposed to solve this continuous
relaxation and to round the solution to obtain a near-optimal discrete design.
While this process is well understood when $N\geq s$, we are not aware of
any bound justifying this technique in the underinstrumented situation $N<s$.

\subsection{A continuous relaxation}

The continuous relaxation of Problem~\eqref{Pp_intro} which we consider is obtained
by replacing the integer variable
$\vec{n}\in \mathbb{N}^s$ by a continuous variable $\vec{w}$
in Problem~\eqref{optdesProblem}:
\begin{equation}
\max_{\substack{\vec{w}\ \in (\RR_+)^s\\ \sum_k w_k \leq N}}\ \Phi_p(M_F(\vec{w})) \label{Pcont}
\end{equation}

Note that the criterion $\varphi_p(\vec{w})$ is raised to the power $1/p$ in Problem~\eqref{Pcont}
(we have $\Phi_p(M_F(\vec{w}))= m^{-1/p}\varphi_p(\vec{w})^{1/p}$ for $p>0$).
The limit of Problem~\eqref{Pcont} as $p\to 0^+$ is hence the maximization of the
determinant of $M_F(\vec{w})$ (cf.\ Equation~\eqref{phip}). 

We assume without loss of generality that the matrix $M_F(\indic)=\sum_{k=1}^s M_k$ is of
full rank (where $\indic$ denotes the vector of all ones). This ensures the existence of a
vector $\vec{w}$ which is feasible for Problem~\eqref{Pcont}, and such that $M_F(\vec{w})$ has full rank.
If this is not the case ($r^*:=\rank(M_F(\indic))<m)$, we define instead a projected
version of the continuous relaxation: Let $U \Sigma U^T$ be a singular value decomposition of
$M_F(\indic)$. We denote by $U_{r^*}$ the matrix formed with the $r^*$ leading singular vectors of $M_F(\indic)$,
i.e.\ the $r^*$ first columns of $U$.
It can be seen that Problem~\eqref{Pcont}
is equivalent to the problem with projected information matrices $\bar{M_k}:=U_{r^*}^T M_k U_{r^*}$
(see Paragraph $7.3$ in~\cite{Puk93}).

The functions $X\mapsto \log(\det(X))$ and $X\mapsto X^p$ ($p\in]0,1]$) are
strictly concave on the interior of $\mathbb{S}_m^+$, so that the continuous relaxation~\eqref{Pcont}
can be solved by interior-points technique or multiplicative
algorithms~\cite{Atw73,DPZ08,Yu10a,Sagnol09SOCP}. The strict concavity of the objective function indicates in addition that
Problem~\eqref{Pcont} admits a unique solution if and only if
$$w_1 M_1+w_2 M_2 + \ldots + w_s M_s=y_1 M_1 + y_2 M_2 + \ldots + y_s M_s \Rightarrow (w_1,\ldots,w_s)=(y_1,\ldots,y_s),$$
that is to say whenever the matrices $M_i$ are linearly independent.
In this paper, we focus on the rounding techniques only,
and we assume that an optimal solution $\vec{\vec{w^*}}$ of the
relaxation~\eqref{Pcont} is already known.
In the sequel, we also denote a discrete solution of Problem~\eqref{Pp_intro}
by $\vec{n}^*$ and a binary solution of Problem~\eqref{Ppbin} by $S^*$.
Note that we always have $\varphi_p(\vec{w^*}) \geq \varphi_p(\vec{n}^*)\geq \varphi_p(S^*)$.

\subsection{Posterior bounds}

In this section, we are going to bound from below
the approximation ratio $\varphi_p(\vec{n})/\varphi_p(\vec{w^*})$
for an arbitrary discrete design $\vec{n}$,
and we propose a rounding algorithm which maximizes this approximation factor.
The lower bound depends on the continuous optimal variable~$\vec{w^*}$,
and hence we refer it as a \emph{posterior} bound.
We start with a result for binary designs ($\forall i\in [s], n_i\leq 1$),
which we associate with a subset $S$ of $[s]$ as in Section~\ref{sec:submod}.
The proof relies on several matrix inequalities and technical lemmas on the directional derivative of
a scalar function applied to a symmetric matrix, and is therefore presented in Appendix~\ref{sec:proofIneq}.

\begin{proposition} \label{prop:boundW}
Let $p\in[0,1]$ and $\vec{w^*}$ be optimal for the continuous relaxation~\eqref{Pcont}
of Problem~\eqref{Pp_intro}. Then, for any subset $S$ of $[s]$, the following inequality holds:
$$\frac{1}{N} \sum_{i\in S} (w_i^*)^{1-p} \leq \frac{\varphi_p(S)}{\varphi_p(\vec{w^*})}.$$
\end{proposition}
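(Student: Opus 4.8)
The plan is to combine the first-order optimality conditions for the continuous relaxation with a self-contained matrix inequality of Hölder type. First I would record the equivalence-theorem (KKT) conditions for $\vec{w^*}$. Set $M^\ast:=M_F(\vec{w^*})=\sum_j w_j^\ast M_j$, which is positive definite under the standing full-rank assumption. Since $\varphi_p$ is concave, nondecreasing, and positively homogeneous of degree $p$, the constraint $\sum_j w_j\le N$ is active and there is a multiplier $\mu\ge0$ with $\partial_{w_i}\varphi_p(\vec{w^*})=p\,\trace((M^\ast)^{p-1}M_i)\le\mu$ for all $i$, with equality whenever $w_i^\ast>0$. Euler's identity $\sum_i w_i^\ast\,\partial_{w_i}\varphi_p(\vec{w^*})=p\,\varphi_p(\vec{w^*})$ together with $\sum_i w_i^\ast=N$ pins down $\mu=p\,\varphi_p(\vec{w^*})/N$, so that
\[\trace\big((M^\ast)^{p-1}M_i\big)\le\frac{\varphi_p(\vec{w^*})}{N}\quad\text{for all }i,\text{ with equality on the support of }\vec{w^*}.\]

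Next I would convert the claimed bound into a matrix inequality; this is where optimality (hence the \emph{posterior} nature of the bound) enters. Take $p\in(0,1)$ (the value $p=1$ is handled by the support convention $0^0=0$, and $p=0$ by continuity). Since $(w_i^\ast)^{1-p}=0$ off the support, while $\trace((M^\ast)^{p-1}M_i)=\varphi_p(\vec{w^*})/N$ on it, the left-hand side of the proposition rewrites as
\[\frac{\varphi_p(\vec{w^*})}{N}\sum_{i\in S}(w_i^\ast)^{1-p}=\sum_{i\in S}(w_i^\ast)^{1-p}\,\trace\big((M^\ast)^{p-1}M_i\big)=\trace\big((M^\ast)^{p-1}P\big),\quad P:=\sum_{i\in S}(w_i^\ast)^{1-p}M_i.\]
Thus the proposition is equivalent to $\trace((M^\ast)^{p-1}P)\le\trace M_S^p$ with $M_S:=\sum_{i\in S}M_i$. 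Writing $A_S:=\sum_{i\in S}w_i^\ast M_i\preceq M^\ast$ and using that $x\mapsto x^{p-1}$ is operator antitone on $\Splusplus$ (so $(M^\ast)^{p-1}\preceq A_S^{p-1}$), pairing with $P\succeq0$ gives $\trace((M^\ast)^{p-1}P)\le\trace(A_S^{p-1}P)$. This reduces everything to the self-contained inequality
\[\trace\Big(\big(\textstyle\sum_{i\in S}w_i^\ast M_i\big)^{p-1}\big(\textstyle\sum_{i\in S}(w_i^\ast)^{1-p}M_i\big)\Big)\le\trace\Big(\big(\textstyle\sum_{i\in S}M_i\big)^p\Big),\]
which no longer involves the coordinates of $\vec{w^*}$ outside $S$.

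Finally I would prove this last inequality, the heart of the matter. In the scalar (commuting) case it is exactly Hölder's inequality $\sum_i v_i^{1-p}a_i\le(\sum_i v_ia_i)^{1-p}(\sum_i a_i)^{p}$ with conjugate exponents $\tfrac{1}{1-p},\tfrac1p$, multiplied through by $(\sum_i v_ia_i)^{p-1}$. For the matrix case I would first reduce to rank-one $M_i=x_ix_i^{\mathsf{T}}$ by splitting each $M_i$ into rank-one pieces carrying the same weight $w_i^\ast$, which leaves all three matrices unchanged. Observing that $F(\vec v):=\trace\big((\sum_i v_iM_i)^{p-1}\sum_i v_i^{1-p}M_i\big)$ is invariant under scaling $\vec v\mapsto t\vec v$ and equals $\trace M_S^p$ at the uniform weights $\vec v=\indic$, the claim becomes: \emph{the uniform weights maximize $F$}. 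Using the divided-difference (Daleckii--Krein) formula for the Fréchet derivative of a matrix function, and the identity $\trace\big(D(X^{p-1})(H)\,X\big)=(p-1)\trace(X^{p-1}H)$, one checks that the two contributions to $\partial_{v_j}F|_{\indic}$ cancel, so $\vec v=\indic$ is a critical point.

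The main obstacle is upgrading this stationarity to a \emph{global} maximum in the genuinely noncommutative regime, where the scalar Hölder argument has no direct analogue. I would settle it exactly as in Proposition~\ref{prop:ineqPropos}: along a path $\vec v(t)$ joining $\indic$ to the given weights, show that the directional derivative of $F$ is sign-definite, so that $t\mapsto F(\vec v(t))$ attains its maximum at the uniform weights; this reduces to operator convexity of $x\mapsto x^{p-1}$ on $\Splusplus$. The non--full-rank cases then follow from the density of $\Splusplus$ in $\mathbb{S}_m^{+}$. These matrix-derivative computations are precisely what is deferred to the Appendix.
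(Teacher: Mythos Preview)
Your setup through the KKT conditions and the rewriting $\frac{\varphi_p(\vec{w^*})}{N}\sum_{i\in S}(w_i^\ast)^{1-p}=\trace\big((M^\ast)^{p-1}P\big)$ matches the paper exactly. Your antitonicity step, replacing $M^\ast$ by $A_S=\sum_{i\in S}w_i^\ast M_i$, is a clean simplification that the paper defers to the very end (where it appeals instead to the fact that $X\mapsto\trace(X^\dagger M_i)$ is $\preceq$-monotone on matrices whose range contains $M_i$). Note, however, that when $A_S$ is singular the L\"owner inequality $(M^\ast)^{p-1}\preceq A_S^{p-1}$ fails as stated; what survives is the trace inequality after pairing with $P$, precisely because the range of $P$ lies in the range of $A_S$, so this needs the pseudo-inverse formulation rather than a bare density argument. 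After this reduction, your remaining claim---that $F(\vec v)=\trace\big((\sum_i v_iM_i)^{p-1}\sum_i v_i^{1-p}M_i\big)$ is maximized at $\vec v=\indic$---is exactly the heart of the paper's proof.

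The gap is in your proposed resolution of that last step. Criticality of $\indic$ (which you verify correctly) does not by itself yield a global maximum, and the assertion that a path argument ``reduces to operator convexity of $x\mapsto x^{p-1}$'' is neither the right property nor a complete argument. The paper's path is not a straight segment from $\indic$ to $\vec{w^*}$: it repeatedly \emph{raises the smallest coordinate} $v_k$ until all coordinates coincide, showing at each stage that $\partial_{v_k}F\ge 0$. The proof of this sign (Equation~\eqref{Mkparent}) hinges on the inequality $v_k^{p}\sum_i v_i^{1-p}M_i\preceq\sum_i v_iM_i$ when $v_k$ is minimal, combined with the fact that $H\mapsto D[x\mapsto x^{p-1}](A)(H)$ is \emph{order-reversing} (Lemma~\ref{lemma:nonincDeriv})---i.e., operator \emph{antitonicity} of $x^{p-1}$, not convexity. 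Convexity would bound the Fr\'echet derivative from above, whereas here a lower bound is required; and along a straight path the directional derivative vanishes at $t=0$ by your own criticality computation, so it cannot be ``sign-definite'' there. Your rank-one splitting is also unnecessary. In short, your overall approach is sound and differs from the paper mainly in performing the antitonicity reduction first, but the decisive monotonicity step needs the specific ``equalize-up'' path together with antitonicity, not convexity.
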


\begin{remark}
In this proposition and in the remaining of this article,
we adopt the convention $0^0=0$. 
\end{remark}

We point out that this proposition includes as a special case a result of Pukelsheim~\cite{Puk80}, already generalized by Harman and Trnovsk\'a~\cite{HT09},
who obtained:
$$\frac{w_i^*}{N} \leq \frac{\rank M_i}{m},$$
i.e.\ the inequality of Proposition~\ref{prop:boundW} for $p=0$ and a singleton $S=\{i\}$.
However the proof is completely different in our case.
Note that there is no constraint of the form $w_i\leq 1$ in the continuous
relaxation~\eqref{Pcont}, although the previous proposition
relates to binary designs $S\in[s]$.
Proposition~\ref{prop:boundW} suggests to select the $N$ matrices
with the largest coordinates $w_i^*$ to obtain a candidate
$S$ for optimality of the binary problem~\eqref{Ppbin}. We will
give in the next section a \emph{prior bound} (i.e., which does not depend on $\vec{w^*}$)
for the efficiency of this rounded design.\par
\vspace{5mm}

We can also extend the previous proposition to the case of replicated designs $\vec{n} \in \mathbb{N}^s$
(note that the following proposition does not require the design
$\vec{n}$ to satisfy $\sum_i n_i=N$):

\begin{proposition} \label{prop:boundW_n}
Let $p\in[0,1]$ and $\vec{w^*}$ be optimal for the continuous relaxation~\eqref{Pcont}
of Problem~\eqref{Pp_intro}. Then, for any design $\vec{n}\in\mathbb{N}^s$, the following inequality holds:
$$\frac{1}{N} \sum_{i\in[s]} n_i^p (w_i^*)^{1-p} \leq \frac{\varphi_p(\vec{n})}{\varphi_p(\vec{w^*})}.$$
\end{proposition}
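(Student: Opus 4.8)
The plan is to reduce the replicated case to the binary case already established in Proposition~\ref{prop:boundW} by introducing auxiliary matrices that represent each replicated copy as a distinct element. Concretely, given the design $\vec{n}\in\mathbb{N}^s$ I would pass to an enlarged index set $E$ containing $n_i$ copies of each experiment $i$ (as the paper does in Section~\ref{sec:submod}), and replace $M_i$ by $M_i/n_i$ on each of the $n_i$ copies associated to $i$. Summing these copies recovers $\sum_i n_i \cdot (M_i/n_i)=\sum_i M_i$ evaluated at the binary selection $S=E$ — but this naive rescaling changes the underlying matrices, so the continuous relaxation changes too, and I would need to verify that the optimal $\vec{w^*}$ transfers correctly. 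This suggests the cleaner route is to prove the inequality directly via the same variational argument that underlies Proposition~\ref{prop:boundW}, rather than by a reduction.

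The direct approach would mimic the optimality-of-$\vec{w^*}$ argument. Since $\vec{w^*}$ maximizes the concave relaxation~\eqref{Pcont} subject to $\sum_k w_k\le N$, the first-order (KKT) conditions give, for each $i$, an inequality relating the directional derivative of $\varphi_p$ at $\vec{w^*}$ along coordinate $i$ to a common Lagrange multiplier; in particular one obtains an expression of the form $\operatorname{trace}\big((M_F(\vec{w^*}))^{p-1}M_i\big)\le \tfrac{p}{N}\varphi_p(\vec{w^*})$, with equality on the support of $\vec{w^*}$. The key step is then a \emph{concavity/gradient inequality} for $\varphi_p$: because $\vec{w}\mapsto\varphi_p(\vec{w})$ is concave on $(\RR_+)^s$ for $p\in]0,1]$, I have $\varphi_p(\vec{n})\le \varphi_p(\vec{w^*})+\langle\nabla\varphi_p(\vec{w^*}),\vec{n}-\vec{w^*}\rangle$; but I want a lower bound on $\varphi_p(\vec{n})$, so the relevant direction is to exploit \emph{homogeneity} instead. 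Since $\varphi_p$ is positively homogeneous of degree $p$, Euler's identity yields $\langle\nabla\varphi_p(\vec{w^*}),\vec{w^*}\rangle=p\,\varphi_p(\vec{w^*})$, and combining this with the weighted AM-GM / Hölder-type inequality applied to the terms $n_i^p(w_i^*)^{1-p}$ should produce exactly the claimed sum $\tfrac1N\sum_i n_i^p(w_i^*)^{1-p}$.

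More precisely, I expect the engine to be a convexity inequality of the type $n_i^p (w_i^*)^{1-p}\le p\,n_i (w_i^*)^{-1}\cdot(w_i^*)/ \text{(something)} + (1-p)\,w_i^*$, i.e.\ the weighted arithmetic-geometric mean inequality $a^p b^{1-p}\le p a + (1-p) b$ applied cleverly so that the $M_i$-weighted gradient terms assemble into $\operatorname{trace}(M_F(\vec{w^*}))^p=\varphi_p(\vec{w^*})$. Summing over $i$ and invoking the KKT stationarity $\operatorname{trace}\big((M_F(\vec{w^*}))^{p-1}M_i\big)\le\tfrac{p}{N}\varphi_p(\vec{w^*})$ should collapse everything to the stated bound. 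The main obstacle, as the authors themselves flag by deferring the proof to Appendix~\ref{sec:proofIneq}, is the matrix-analytic justification of the directional-derivative formula for $\vec{w}\mapsto\operatorname{trace}(M_F(\vec{w}))^p$ at a point where $M_F(\vec{w^*})$ may be rank-deficient or where the derivative of $x\mapsto x^p$ blows up near $0$ — this is where the convention $0^0=0$ and a careful limiting argument (along the lines of the density and continuity arguments used in the proof of Proposition~\ref{prop:ineqPropos}) become essential. I would isolate that technical derivative computation as a separate lemma and treat the homogeneity-plus-AM-GM assembly as the comparatively routine final step.
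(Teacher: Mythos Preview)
Your first instinct---reduce to the binary case of Proposition~\ref{prop:boundW} via an enlarged index set with $n_i$ copies of experiment $i$---is exactly what the paper does, and you abandoned it one step too early. The mistake was to rescale the \emph{matrices} to $M_i/n_i$; the correct move is to keep $n_i$ identical copies $M_{i,k}=M_i$ and instead split the \emph{weight}. Because the information matrix depends on the copies only through $\sum_k w_{i,k}$, any allocation $(w_{i,k})$ with $\sum_{k=1}^{n_i} w_{i,k}=w_i^*$ is optimal for the enlarged continuous relaxation. Applying Proposition~\ref{prop:boundW} to the full binary set $S=\{(i,k):i\in[s],\,k\in[n_i]\}$ in this enlarged problem gives
\[
\frac{\varphi_p(\vec{n})}{\varphi_p(\vec{w^*})}\ \ge\ \frac{1}{N}\sum_{i=1}^{s}\sum_{k=1}^{n_i} (w_{i,k})^{1-p},
\]
and choosing the uniform split $w_{i,k}=w_i^*/n_i$ (optimal by concavity of $t\mapsto t^{1-p}$) yields $\frac{1}{N}\sum_i n_i^p(w_i^*)^{1-p}$ on the right. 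That is the entire proof; no new matrix analysis is needed beyond what already went into Proposition~\ref{prop:boundW}.

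Your alternative ``direct'' route does not close. The KKT identity you quote should read $\trace\big(M_F(\vec{w^*})^{p-1}M_i\big)\le \varphi_p(\vec{w^*})/N$ (no factor $p$; cf.\ Proposition~\ref{prop:KKT_cond}), but more importantly the weighted AM--GM inequality $n_i^p(w_i^*)^{1-p}\le p\,n_i+(1-p)\,w_i^*$ only gives $\sum_i n_i^p(w_i^*)^{1-p}\le p\sum_i n_i+(1-p)N$, which never mentions $\varphi_p(\vec{n})$ and so cannot produce the claimed bound. The concavity inequality you wrote down also points the wrong way, as you noted yourself. The phrases ``should produce'' and ``should collapse'' are where the argument would have to do real work, and there is no mechanism here that links the scalar quantities $n_i^p(w_i^*)^{1-p}$ to the spectral quantity $\varphi_p(\vec{n})$ without going through something equivalent to Proposition~\ref{prop:boundW}.
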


\begin{proof}
We consider the problem in which the matrix $M_i$ is replicated $n_i$ times:
$$\forall i\in[s],\ \forall k\in[n_i], M_{i,k}=M_i.$$
Since $\vec{w^*}$ is optimal for Problem~\eqref{Pcont}, it is clear that
$(w_{i,k})_{(i,k) \in \cup_{j\in[s]} \{j\} \times [n_j]}$ is optimal
for the problem with replicated matrices if
\begin{equation}
\forall i \in [s], \sum_{k \in [n_i]} w_{i,k} = w_i^*, \label{constraintalloc} 
\end{equation}
i.e.\ $w_{i,k}$ is the part of $w_i^*$
allocated to the $k\th$ copy of the matrix~$M_i$. For such a vector,
Proposition~\ref{prop:boundW} shows that
$$\frac{\varphi_p(\vec{n})}{\varphi_p(\vec{w^*})} \geq \frac{1}{N} \sum_{i=1}^s \sum_{k=1}^{n_i} (w_{i,k}^*)^{1-p}.$$
Finally, it is easy to see (by concavity) that the latter lower bound is maximized with respect
to the constraints of Equation~\eqref{constraintalloc} if
$\forall i\in[s], \forall k\in [n_i],\ w_{i,k} = \frac{w_i^*}{n_i}$:
$$\frac{\varphi_p(\vec{n})}{\varphi_p(\vec{w^*})} \geq
\frac{1}{N} \sum_{i=1}^s \sum_{k=1}^{n_i} \left(\frac{w_i^*}{n_i}\right)^{1-p}
=\frac{1}{N} \sum_{i=1}^s n_i^p (w_i^*)^{1-p}.$$
\end{proof}

We next give a simple rounding algorithm which finds the feasible
design $\vec{n}$ which maximizes the lower bound of Proposition~\ref{prop:boundW_n}:
\begin{equation}
 \max_{\substack{\vec{n} \in \mathbb{N}^s \\ \sum n_i = N }}\quad \sum_{j \in [s]} n_j^p\ w_j^{1-p}. \label{probaposteriori}
\end{equation}
The latter maximization problem is in fact a \emph{ressource allocation problem
with a convex separable objective},
and the incremental algorithm which we give below is
well known in the resource allocation community (see e.g.~\cite{IK88}).

\begin{algorithm}
\caption{[Incremental rounding]
\label{algo:greedyrounding}}
\begin{algorithmic}
\STATE \textbf{Input:} A nonnegative vector $\vec{w} \in \mathbb{R}^s$ such that $\sum_{i=1}^s w_i =N\in \mathbb{N}\setminus\{0\}$.
\STATE Sort the coordinates of $\vec{w}$; We assume wlog that
$w_1\geq w_2\geq\ldots\geq w_s$;
\STATE $\vec{n} \leftarrow [1,0\ldots,0]\in\mathbb{R}^s$
\FOR{$k=2\ldots N$}
\STATE Select an index $i_{max} \in \operatorname{argmax}_{i\in[s]} \limits \big((n_i+1)^p- n_i^p\big)\ w_i^{1-p}$
\STATE $n_{i_{max}} \leftarrow n_{i_{max}}+1$
\ENDFOR
\STATE \textbf{return:} a $N-$replicated design $\vec{n}$ which maximizes $\sum_{i=1}^s n_i^p w_i^{1-p}$.
\end{algorithmic}
\end{algorithm}

\begin{remark}
If $\vec{w}$ is sorted ($w_1\geq w_2\geq\ldots\geq w_s$), then
the solution of Problem~\eqref{probaposteriori} clearly satisfies
$n_1 \geq n_2 \geq \ldots \geq n_s$. Consequently, it is not necessary to
test every index $i \in [s]$ to compute the $\operatorname{argmax}$ in
Algorithm~\ref{algo:greedyrounding}. Instead, one only needs to 
compute the increments $\big((n_i+1)^p- n_i^p\big)\ w_i^{1-p}$
for the $i\in [s]$ such that
$i=1$ or $n_i+1 \leq n_{i-1}$.
\end{remark}

We shall now give a posterior bound for the budgeted problem~\eqref{Ppbdg}. We only
provide a sketch of the proof, since the reasoning is the same as for
Propositions~\ref{prop:boundW} and~\ref{prop:boundW_n}.
We also point out
that the approximation bound provided in the next proposition
can be maximized over the set of $B-$budgeted designs,
thanks to a dynamic programming algorithm
which we do not detail here (see~\cite{MM76}).

\begin{proposition} \label{prop:boundW_bdg}
Let $p\in[0,1]$ and $\vec{w^*}$ be optimal for the continuous relaxation
\begin{equation}
\max_{\vec{w}\in\mathbb{R}^s}  \left\{\Phi_p\big(M_F(\vec{w})\big):\ \vec{w}\geq\vec{0},\ \sum_{i\in[s]} c_i w_i \leq B\right\}
\label{Ppbdgcont}
\end{equation}
of Problem~\eqref{Ppbdg}. Then, for any design
$\vec{n}\in\mathbb{N}^s$, the following inequality holds:
$$\frac{1}{B} \sum_{i\in[s]} c_i n_i^p (w_i^*)^{1-p} \leq \frac{\varphi_p(\vec{n})}{\varphi_p(\vec{w^*})}.$$
\end{proposition}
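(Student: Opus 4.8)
The plan is to mimic the two-step reduction used for Proposition~\ref{prop:boundW_n}, replacing the uniform-weight continuous relaxation by the cost-weighted relaxation~\eqref{Ppbdgcont}. The author explicitly says the reasoning is ``the same,'' so the strategy is to (i) establish the budgeted analogue of the binary posterior bound (Proposition~\ref{prop:boundW}) and then (ii) optimize over an allocation of the continuous mass $w_i^*$ across the $n_i$ copies of each matrix.

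\medskip

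First I would prove the budgeted counterpart of Proposition~\ref{prop:boundW}, namely that for any subset $S\subseteq[s]$ one has $\tfrac{1}{B}\sum_{i\in S} c_i (w_i^*)^{1-p}\le \varphi_p(S)/\varphi_p(\vec{w^*})$. The proof of Proposition~\ref{prop:boundW} (in Appendix~\ref{sec:proofIneq}) rests on the first-order optimality conditions of the continuous relaxation. The only change here is that the feasible set is $\{\vec{w}\ge\vec0:\ \sum_i c_i w_i\le B\}$ rather than $\{\vec{w}\ge\vec0:\ \sum_i w_i\le N\}$, so the Lagrange/equivalence (Kiefer--Wolfowitz-type) conditions characterizing $\vec{w^*}$ carry an extra factor $c_i$ on each coordinate. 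I expect the inequality to emerge with $B$ and $c_i$ in precisely the places shown in the statement, provided one re-derives the optimality condition for~\eqref{Ppbdgcont}; this is the step I would write out in the appendix rather than repeat here.

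\medskip

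Second I would lift this binary bound to arbitrary replicated designs $\vec{n}\in\mathbb{N}^s$ exactly as in the proof of Proposition~\ref{prop:boundW_n}. I replicate each matrix $M_i$ into $n_i$ identical copies $M_{i,k}=M_i$, observe that $\vec{w^*}$ remains optimal for the replicated relaxation under any split $\sum_{k\in[n_i]} w_{i,k}=w_i^*$ (the cost of each copy of $M_i$ is still $c_i$, so the budget constraint is unaffected), apply the budgeted binary bound to the set $S=\cup_i\{i\}\times[n_i]$, and then maximize $\sum_i\sum_k c_i(w_{i,k})^{1-p}$ over the splits. Since $1-p\in[0,1]$ the map $t\mapsto t^{1-p}$ is concave, so by symmetry the optimal split is the uniform one $w_{i,k}=w_i^*/n_i$, giving $\sum_k c_i (w_i^*/n_i)^{1-p}=c_i n_i^p (w_i^*)^{1-p}$ and hence the claimed bound $\tfrac1B\sum_i c_i n_i^p (w_i^*)^{1-p}\le \varphi_p(\vec{n})/\varphi_p(\vec{w^*})$.

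\medskip

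The main obstacle is step (i): verifying that the appendix proof of Proposition~\ref{prop:boundW}, which I have not seen in detail, genuinely depends on the linear constraint only through its gradient and therefore adapts to the weighted budget constraint with the $c_i$ inserted cleanly. If that proof uses the scaling $\sum_i w_i=N$ in a more essential way (for instance through homogeneity arguments that normalize the total mass), I would instead redo the optimality analysis directly for~\eqref{Ppbdgcont}: write the stationarity condition $\operatorname{trace}\big(\nabla\Phi_p(M_F(\vec{w^*}))\,M_i\big)\le \mu\, c_i$ with equality on the support of $\vec{w^*}$, contract against $M_F(S)$ and $M_F(\vec{w^*})$, and use the same matrix-monotonicity inequality that underlies the binary case. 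Step (ii) is routine concavity and carries over verbatim.
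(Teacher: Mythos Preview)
Your proposal is correct and follows essentially the same two-step route as the paper: first a budgeted analogue of Proposition~\ref{prop:boundW} via the modified optimality conditions, then the replication-and-concavity argument of Proposition~\ref{prop:boundW_n}. The paper resolves your stated uncertainty in step~(i) with a clean change of variable $z_i:=NB^{-1}c_i w_i$, which recasts~\eqref{Ppbdgcont} in the standard form~\eqref{Pcont} with matrices $M_i'=B(Nc_i)^{-1}M_i$; this immediately yields the budgeted KKT conditions $Bc_i^{-1}\trace(M_F(\vec{w^*})^{p-1}M_i)\le\varphi_p(\vec{w^*})$, after which the appendix proof carries over verbatim upon multiplying by factors proportional to $c_i(w_i^*)^{1-p}$ rather than $(w_i^*)^{1-p}$.
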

\begin{proof}
First note that after the change of variable $z_i:= N B^{-1} c_i w_i$,
the continuous
relaxation~\eqref{Ppbdgcont} can be rewritten under the standard form~\eqref{Pcont},
where the matrix $M_i$ is replaced by \mbox{$M_i' = B (N c_i)^{-1} M_i$}.
Hence, we know from Proposition~\ref{prop:KKT_cond} that
the optimality conditions of Problem~\eqref{Ppbdgcont}
are:
$$\forall i \in [s],\quad Bc_i^{-1} \trace(M_F(\vec{w^*})^{p-1} M_i) \leq \varphi_p\big(\vec{w^*}\big),$$
with inequality if $w_i^*>0$. Then, we can apply exactly the same reasonning
as in the proof of Proposition~\ref{prop:boundW}, to show that
$$\forall S\subset[s],\quad 
\frac{1}{B} \sum_{i\in S} c_i (w_i^*)^{1-p} \leq \frac{\varphi_p(S)}{\varphi_p(\vec{w^*})}.$$
The only change is that the optimality conditions
must be multiplied by a factor proportional to $c_i (w_i^*)^{1-p}$
(instead of $(w_i^*)^{1-p}$ as in Equation~\eqref{proporKKT}).
Finally, we can apply the same arguments as in the proof of~\ref{prop:boundW_n}
to obtain the inequality of this proposition. 
\end{proof}

\subsection{Prior bounds}

In this section, we derive \emph{prior bounds}
for the solution obtained by rounding the continuous solution
of Problem~\eqref{Pcont}, i.e. approximation bounds which
depend only on the parameters $p$, $N$ and $s$ of
Problems~\eqref{Pp_intro} and~\eqref{Ppbin}.
We first need to state one technical lemma.

\begin{lemma} \label{lemma:minsummax}
Let $\vec{w} \in \mathbb{R}^s$ be a nonnegative vector summing to $r\leq s$,
$r\in \mathbb{N}$, and $p$ be an arbitrary real in the interval $[0,1]$. Assume without loss of generality
that the coordinates of $\vec{w}$ are sorted, i.e. $w_1 \geq \ldots \geq w_s \geq 0$.
If one of the following two conditions holds:
\begin{align*}
 (i)&\quad \forall i\in[s],\ w_i\leq 1 ; \\
 (ii)&\quad p \leq 1 - \frac{\ln r}{\ln s},
\end{align*}
then, the following inequality holds:
$$\frac{1}{r} \sum_{i=1}^r w_i^{1-p} \geq \left(\frac{r}{s}\right)^{1-p}.$$
\end{lemma}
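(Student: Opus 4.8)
The plan is to substitute $q:=1-p$ and to prove the equivalent inequality
\[
\sum_{i=1}^{r} w_i^{q}\ \ge\ r^{1+q}\,s^{-q},
\]
which is the claimed bound after dividing by $r$. I treat $q\in(0,1]$ (that is, $p\in[0,1)$); the degenerate case $p=1$, $q=0$, is immediate with the convention $0^{0}=0$, since under either hypothesis the top $r$ coordinates must all be positive. The key structural observation is that the map $\Psi(\vec w):=\sum_{i=1}^{r} w_i^{q}$ is \emph{concave}, being a sum of the concave coordinatewise functions $t\mapsto t^{q}$, whereas the feasible set
\[
P:=\Big\{\vec w:\ w_1\ge\cdots\ge w_s\ge 0,\ \textstyle\sum_i w_i=r\Big\}
\]
(intersected with the half-space $\{w_1\le 1\}$ under hypothesis $(i)$) is a compact polytope. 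Since a concave function attains its minimum over a compact convex set at an extreme point, it suffices to bound $\Psi$ from below at the vertices of the relevant polytope.

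Next I would describe these vertices. In the absence of the box constraint they are exactly the vectors $v^{(k)}=\tfrac{r}{k}(\underbrace{1,\dots,1}_{k},0,\dots,0)$ for $k=1,\dots,s$, giving
\[
\Psi\big(v^{(k)}\big)=\begin{cases} k^{1-q}\,r^{q}, & k\le r,\\ r^{1+q}\,k^{-q}, & k\ge r.\end{cases}
\]
The upper branch is nondecreasing in $k$ (as $1-q=p\ge 0$) and the lower one is nonincreasing, so the minimum over all vertices is $\min\!\big(r^{q},\,r^{1+q}s^{-q}\big)$, attained at $k=1$ or $k=s$. Hypothesis $(ii)$, $p\le 1-\frac{\ln r}{\ln s}$, is precisely $s^{q}\ge r$, which is equivalent to $r^{q}\ge r^{1+q}s^{-q}$; this makes $r^{1+q}s^{-q}$ the minimizer and settles case $(ii)$.

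Under the box constraint $(i)$ the vertices of the chain polytope take the ``at most one fractional value'' form: $a$ leading ones, then $b$ copies of some $c\in(0,1)$, then zeros, with $a+bc=r$. Here $c<1$ forces $a+b\ge r$, so the top $r$ coordinates consist of the $a$ ones together with $r-a$ of the $c$'s. For fixed $a$ the value $\Psi$ is nonincreasing in $b$, and since every such vertex satisfies $b\le s-a$, its value is at least the value at $b=s-a$, namely
\[
\phi(a):=a+(r-a)\Big(\tfrac{r-a}{s-a}\Big)^{q}.
\]
As $\phi(0)=r^{1+q}s^{-q}$ is exactly the target, the whole case reduces to showing that $\phi$ is nondecreasing on $[0,r]$.

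This monotonicity is the main obstacle. A direct differentiation gives, after factoring and setting $x:=\tfrac{r-a}{s-a}\in(0,1]$,
\[
\phi'(a)=1-x^{q}\big(1+q(1-x)\big),
\]
so $\phi'\ge 0$ is equivalent to the elementary inequality $x^{q}\big(1+q(1-x)\big)\le 1$. I would prove this by writing $x=1-u$ with $u\in[0,1)$ and applying the Bernoulli inequality $(1-u)^{-q}\ge 1+qu$ (valid for the nonpositive exponent $-q$), which yields $1+q(1-x)=1+qu\le x^{-q}$ at once. With $\phi$ nondecreasing we obtain $\Psi\ge\phi(a)\ge\phi(0)=r^{1+q}s^{-q}$ at every vertex, which completes case $(i)$ and hence the lemma.
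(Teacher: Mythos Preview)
Your argument is correct. Both your proof and the paper's converge on the same endgame: showing that
\[
\phi(a)=a+(r-a)\Big(\tfrac{r-a}{s-a}\Big)^{1-p}
\]
is nondecreasing on $[0,r]$, so that its minimum $\phi(0)=r\,(r/s)^{1-p}$ gives the bound. The difference lies in how the problem is reduced to that single-variable check. The paper proceeds in several steps: a perturbation argument to force the last $s-r$ coordinates to be equal, a Schur-concavity argument to push the remaining mass to an extremal configuration, and then a piecewise-concavity observation to reduce to a finite set of candidate values of $u$. You replace all of this with the single structural observation that $\Psi(\vec w)=\sum_{i\le r} w_i^{q}$ is concave, hence attains its minimum over the (compact) order polytope at a vertex, together with the explicit vertex description of the chain polytope (with or without the box constraint). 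This is genuinely cleaner; the vertex lists you give are correct and standard (one block of a constant value in the unboxed case; at most one fractional level in the boxed case), and your ``nonincreasing in $b$'' step immediately collapses each vertex to a value $\phi(a)$.

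For case~$(ii)$ the two proofs are closer in spirit: the paper reduces via Schur-concavity to a two-point comparison $u=0$ versus $u=r/s$, which is exactly your $k=1$ versus $k=s$ vertex comparison. Your identification of condition~$(ii)$ with $s^{q}\ge r$ is the same as the paper's. Finally, your Bernoulli step $(1-u)^{-q}\ge 1+qu$ for $q\in(0,1]$ is a crisp justification of the monotonicity of $\phi$, which the paper leaves as ``one can check.''
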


\begin{proof}

We start by showing the lemma under the condition $(i)$. To this end, we consider the minimization problem
\begin{equation}
\min_{\vec{w}} \{\sum_{i=1}^r w_i^{1-p}:\ \sum_{i=1}^s w_i = r ;\ 1\geq w_1 \geq \ldots \geq w_s\geq 0\}. \label{minimaxcoord3} 
\end{equation}

Our first claim is that the optimum is necessarily attained
by a vector of the form
$\vec{w} = [ u + \alpha_1,\ldots,u+\alpha_r,u,\ldots,u]^T,$
where $\alpha_1,\ldots,\alpha_r\geq 0$, i.e.\ the $s-r$ coordinates
of $\vec{w}$ which are not involved in the objective function are equal.
To see this, assume \emph{ad absurbium} that $\vec{w}$ is optimal for Problem~\eqref{minimaxcoord3}, with $w_i>w_{i+1}$ for an index $i>r$.
Define $k$ as the smallest integer such that $w_1=w_2=\ldots=w_k>w_{k+1}$.
Then, $\vec{e_i} - 1/k \sum_{j\in[k]} \vec{e_j}$ is a feasible direction along
which the objective criterion $\sum_{i=1}^r w_i^{1-p}$ is decreasing,
a contradiction.
Problem~\eqref{minimaxcoord3} is hence equivalent to:

\begin{equation}
\min_{u,\vec{\alpha}} \{\sum_{i=1}^r (u+\alpha_i)^{1-p}:\ \sum_{i=1}^r \alpha_i = r-su ;\ 0\leq u;\ 0 \leq \alpha_i \leq 1-u\ (\forall i\in[r])  \}. \label{minimaxcoord4} 
\end{equation}
It is known that the objective criterion of Problem~\eqref{minimaxcoord4} is Schur-concave, as a
symmetric separable sum of concave functions (we refer
the reader to the book of Marshall and Olkin~\cite{MO79} for details about
the theory of majorization and Schur-concavity).
This tells us that for all $u\in[0,\frac{r}{s}]$,
the minimum with respect to $\vec{\alpha}$ is attained by
$$\vec{\alpha}=[\underbrace{1-u,\ldots,1-u}_{k\ \textrm{times}},r-su-k(1-u),0,\ldots,0]^T,$$
where $k=\lfloor \frac{r-su}{1-u} \rfloor$ (for a given $u$, this vector majorizes all the vectors of the feasible set). Problem~\eqref{minimaxcoord4}
can thus be reduced to the scalar minimization problem
$$
\min_{u\in[0,\frac{r}{s}]}\ \left\lfloor \frac{r-su}{1-u} \right\rfloor +
\Big( u + r -su - \left\lfloor \frac{r-su}{1-u} \right\rfloor (1-u) \Big)^{1-p}
+\big(r-\left\lfloor \frac{r-su}{1-u} \right\rfloor-1\big) u^{1-p}.
$$
It is not difficult to see that this function is piecewise concave,
on the $r-1$ intervals of the form $u\in\left[\frac{r-(k+1)}{s-(k+1)}, \frac{r-k}{s-k} \right],\ k\in[r-1]$, corresponding to the domains where
$k=\lfloor \frac{r-su}{1-u} \rfloor$ is constant. It follows that the minimum
is attained for a $u$ of the form $\frac{r-k}{s-k}$, where $k \in [r]$,
and the problem reduces to
$$\min_{k\in [r]}\ k + (r-k) \left(\frac{r-k}{s-k}\right)^{1-p}.$$
Finally, one can check that the objective function of the latter
problem is nondecreasing with respect to $k$, such that the minimum
is attained for $k=0$ (which corresponds to the uniform weight vector
$\vec{w}=[r/s,\ldots,r/s]^T$). This achieves the first part of this proof.\par
\vspace{5mm}

The proof of the lemma for the condition $(ii)$ is similar. This time,
we consider the minimization problem
\begin{equation}
\min_{\vec{w}} \{\sum_{i=1}^r w_i^{1-p}:\ \sum_{i=1}^s w_i = r ;\ w_1 \geq \ldots \geq w_s\geq 0\}. \label{minimaxcoord} 
\end{equation}
Again, the optimum is attained by a vector of the form $\vec{w} = [ u + \alpha_1,\ldots,u+\alpha_r,u,\ldots,u]^T,$
which reduces the problem to:
\begin{equation}
\min_{u,\vec{\alpha}} \{\sum_{i=1}^r (u+\alpha_i)^{1-p}:\ \sum_{i=1}^r \alpha_i = r-su ;\ u,\alpha_1,\ldots,\alpha_r\geq0 \}. \label{minimaxcoord2} 
\end{equation}
For a fixed $u$, the Schur-concavity of the objective function indicates
that the minimum is attained for $\vec{\alpha}=[r-su,0,\ldots,0]^T$. Finally, Problem~\eqref{minimaxcoord2} reduces to the scalar minimization problem
$$\min_{u\in[0,\frac{r}{s}]}\ \big( u+(r-su) \big)^{1-p} + (r-1) u^{1-p},$$
where the optimum is always attained for $u=0$ or $u=r/s$ by concavity.
It now is easy to see that the inequality of the lemma is satisfied when
the latter minimum is attained for $u=r/s$, i.e. if $r(\frac{r}{s})^{1-p} \leq r^{1-p}$,
which is equivalent to the condition $(ii)$ of the lemma.
\end{proof}

As a direct consequence of this lemma, we obtain a \emph{prior} approximation bound
for Problem~\eqref{Ppbin} when $p$ is in a neighborhood of $0$.
\begin{theorem}[Approximation bound for $N-$binary designs obtained by rounding]
\label{theo:factorFbin}
Let $p\in[0,1]$, $N\leq s$ and
$\vec{w^*}$ be a solution of the continuous optimal design problem~\eqref{Pcont}.
Let $S$ be the $N-$binary design obtained by selecting the $N$ largest
coordinates of $\vec{w^*}$. If $p\leq 1-\frac{\ln N}{\ln s}$, then
we have
$$\frac{\varphi_p(S)}{\varphi_p(S^*)} \geq \frac{\varphi_p(S)}{\varphi_p(\vec{w^*})} \geq \Big(\frac{N}{s}\Big)^{1-p}.$$
\end{theorem}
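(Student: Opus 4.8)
The plan is to chain the posterior bound of Proposition~\ref{prop:boundW} with the combinatorial estimate of Lemma~\ref{lemma:minsummax}; almost all the analytic difficulty has already been absorbed into those two results, so the theorem reduces to a short verification. First I would dispose of the left-hand inequality $\frac{\varphi_p(S)}{\varphi_p(S^*)} \geq \frac{\varphi_p(S)}{\varphi_p(\vec{w^*})}$: since the binary optimum $S^*$ is feasible for the continuous relaxation~\eqref{Pcont}, we have $\varphi_p(\vec{w^*}) \geq \varphi_p(S^*)$ (this is recorded just after the statement of~\eqref{Pcont}), and dividing the common numerator $\varphi_p(S)$ by the larger denominator yields the claim.

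For the second inequality, I would sort the coordinates so that $w_1^* \geq \ldots \geq w_s^*$; by construction $S=\{1,\ldots,N\}$ is then the index set of the $N$ largest coordinates. Applying Proposition~\ref{prop:boundW} to this particular $S$ gives
$$\frac{\varphi_p(S)}{\varphi_p(\vec{w^*})} \geq \frac{1}{N}\sum_{i=1}^N (w_i^*)^{1-p}.$$
Next I would observe that the constraint $\sum_k w_k \leq N$ of~\eqref{Pcont} is active at the optimum: because $\varphi_p$ is nondecreasing (as noted in the proof of Theorem~\ref{coro:1m1se}), we may assume $\sum_{i=1}^s w_i^* = N$. Thus $\vec{w^*}$ is a nonnegative vector summing to $r:=N\in\mathbb{N}$ with $N\leq s$, and the hypothesis $p\leq 1-\frac{\ln N}{\ln s}$ is precisely condition $(ii)$ of Lemma~\ref{lemma:minsummax} with $r=N$. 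That lemma therefore supplies $\frac{1}{N}\sum_{i=1}^N (w_i^*)^{1-p} \geq (N/s)^{1-p}$, and combining this with the previous display finishes the argument.

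There is essentially no obstacle left at this level: the only point requiring a word of care is the activeness of the sum constraint $\sum_i w_i^* = N$, which is needed so that Lemma~\ref{lemma:minsummax} applies with an integer $r=N$, and which follows from monotonicity of $\varphi_p$. The genuine difficulty lies upstream, in Proposition~\ref{prop:boundW}---whose proof is deferred to the appendix---and in the Schur-concavity and floor-function bookkeeping of Lemma~\ref{lemma:minsummax}; once those are in hand, the rounding guarantee for binary designs is immediate.
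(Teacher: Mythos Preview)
Your proof is correct and follows exactly the same approach as the paper: combine Proposition~\ref{prop:boundW} with Lemma~\ref{lemma:minsummax} applied with $r=N$ under condition~$(ii)$. The paper's own proof is a one-line reference to these two ingredients; you have simply spelled out the details (the first inequality via $\varphi_p(\vec{w^*})\geq\varphi_p(S^*)$, the sorting, and the activeness of $\sum_i w_i^*=N$), all of which are accurate.
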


\begin{proof}
This is straightforward if we combine the result of Proposition~\ref{prop:boundW} and the one of Lemma~\ref{lemma:minsummax}
for $r=N$ and condition $(ii)$.
\end{proof}

In the next theorem, we give an approximation factor for the
design provided by Algorithm~\ref{algo:greedyrounding}.
This factor $F$ is plotted as a function of $p$ and the ratio $\frac{N}{s}$ on Figure~\ref{fig:F3d}.
For every value of $\frac{N}{s}$, this theorem shows that there is a continuously
increasing difficulty from the easy case ($p=1$, where $F=1$) to the most
degenerate problem ($p=0$, where $F=\min(\frac{N}{s},1-\frac{s}{4N})$).

\begin{theorem}[Approximation bound for $N-$replicated designs obtained by rounding]
\label{theo:factorF}
 Let $p\in[0,1]$,
$\vec{w^*}$ be a solution of the continuous optimal design problem~\eqref{Pcont}
and $\vec{n}$ be the vector returned
by Algorithm~\ref{algo:greedyrounding} for the input $\vec{w}=\vec{w^*}$.
Then, we have
$$\frac{\varphi_p(\vec{n})}{\varphi_p(\vec{n^*})} \geq \frac{\varphi_p(\vec{n})}{\varphi_p(\vec{w^*})} \geq F,$$
where $F$ is defined by:
$$F= \left\{\begin{array}{cll}
      \left(\frac{N}{s} \right)^{1-p} & \qquad \textrm{if } \left(\frac{N}{s} \right)^{1-p} \leq \frac{1}{2-p} 
      &\quad  (\textrm{in particular, if } \frac{N}{s}\leq e^{-1}); \\
     1-\frac{s}{N}(1-p)\left(\frac{1}{2-p}\right)^{\frac{2-p}{1-p}} & \qquad
    \textrm{Otherwise}
    & \quad (\textrm{in particular, if } \frac{N}{s} \geq \frac{1}{2});
     \end{array} \right.$$

\end{theorem}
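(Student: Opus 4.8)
The plan is to chain together Proposition~\ref{prop:boundW_n} with the optimality of Algorithm~\ref{algo:greedyrounding}, and then to reduce everything to a one‑dimensional estimate on a ``rounding loss'' function. The first inequality $\varphi_p(\vec{n})/\varphi_p(\vec{n^*}) \geq \varphi_p(\vec{n})/\varphi_p(\vec{w^*})$ is immediate, since $\varphi_p(\vec{w^*}) \geq \varphi_p(\vec{n^*})$ (the continuous relaxation dominates the integer optimum). For the second inequality, I would apply Proposition~\ref{prop:boundW_n} to the vector $\vec{n}$ returned by the algorithm. Because that output maximizes $\sum_i n_i^p (w_i^*)^{1-p}$ over all designs with $\sum_i n_i = N$, and because $\sum_i w_i^*=N$ at optimality of~\eqref{Pcont}, it suffices to prove that $V(\vec{w^*})/N \geq F$, where $V(\vec{w}):=\max\{\sum_i n_i^p w_i^{1-p} : \vec{n}\in\NN^s,\ \sum_i n_i=N\}$, for \emph{every} nonnegative $\vec{w}$ summing to $N$.

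Next I would obtain a convenient lower bound on $V$. Let $\tilde{g}$ be the piecewise‑linear interpolant of $x\mapsto x^p$ through the nonnegative integers. Since the incremental algorithm is exact for the separable concave resource‑allocation problem (the reason Algorithm~\ref{algo:greedyrounding} is correct, cf.~\cite{IK88}), the integer optimum $V(\vec{w})$ equals the value of the concave relaxation in which $x^p$ is replaced by $\tilde{g}$; evaluating that relaxation at the feasible point $\vec{x}=\vec{w}$ yields $V(\vec{w}) \geq \sum_i \tilde{g}(w_i)\,w_i^{1-p} = N - \sum_i \ell(w_i)$, where $\ell(w):=\big(w^p-\tilde{g}(w)\big)\,w^{1-p} \geq 0$ is the per‑coordinate rounding loss. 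Hence $V(\vec{w})/N \geq 1 - \tfrac1N\sum_i \ell(w_i)$, and the whole theorem comes down to upper bounding $\sum_i \ell(w_i)$ subject to $\sum_i w_i = N$ over $s$ coordinates.

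Now I would study $\ell$. On $[0,1]$ one has $\tilde{g}(w)=w$, so $\ell(w)=w-w^{2-p}$, which is concave, vanishes at $0$ and $1$, and attains its interior maximum at $w^{\dagger}=\big(\tfrac{1}{2-p}\big)^{1/(1-p)}$ with value $\kappa:=(1-p)\big(\tfrac{1}{2-p}\big)^{\frac{2-p}{1-p}}$ --- exactly the constant appearing in the second branch of $F$. The key technical step, and the one I expect to be the main obstacle, is to show that $\kappa$ is in fact the global maximum of $\ell$ on $[0,\infty)$: on each interval $[k,k+1]$ with $k\geq1$ the concavity gap $w^p-\tilde{g}(w)$ of $x\mapsto x^p$ is smaller (the function flattens), and one must check that even after multiplication by the growing factor $w^{1-p}$ the resulting bump never exceeds $\kappa$. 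This is a clean but genuine interval‑by‑interval calculus estimate. Granting it, I define the concave majorant $\hat{\ell}$ that equals $\ell$ on $[0,w^{\dagger}]$ and equals the constant $\kappa$ afterwards; by construction $\hat{\ell}\geq\ell$ and $\hat{\ell}$ is concave.

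Finally I would conclude by Jensen's inequality. Since $\sum_i w_i = N$ and there are $s$ coordinates, $\sum_i \ell(w_i) \leq \sum_i \hat{\ell}(w_i) \leq s\,\hat{\ell}(N/s)$. If $N/s \leq w^{\dagger}$, which is equivalent to $(N/s)^{1-p}\leq \tfrac{1}{2-p}$, then $\hat{\ell}(N/s)=N/s-(N/s)^{2-p}$, and using $s(N/s)^{2-p}=N(N/s)^{1-p}$ one gets $V(\vec{w})/N \geq (N/s)^{1-p}$, the first branch; this case is attained (up to equality) by the uniform vector $\vec{w}=[N/s,\ldots,N/s]^T$. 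If instead $N/s \geq w^{\dagger}$, then $\hat{\ell}(N/s)=\kappa$, giving $V(\vec{w})/N \geq 1-\tfrac{s}{N}\kappa$, the second branch. The two expressions coincide at the threshold $N/s=w^{\dagger}$, so $F$ is continuous, and the ``in particular'' inclusions follow from the elementary inequalities $1+t\leq e^{t}$ and $2^{t}\leq 1+t$ on $[0,1]$ applied with $t=1-p$. Besides the global‑maximum verification for $\ell$, the only other point requiring care is the justification of the resource‑allocation identity $V(\vec{w})=\max_{\sum x_i=N}\sum_i \tilde{g}(x_i)w_i^{1-p}$, which underlies the lower bound on $V$.
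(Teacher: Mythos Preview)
Your proposal is sound and follows a genuinely different path from the paper. Both arguments open identically---invoke Proposition~\ref{prop:boundW_n}, use that Algorithm~\ref{algo:greedyrounding} maximizes $\sum_i n_i^p(w_i^*)^{1-p}$ over feasible designs, and reduce everything to a uniform lower bound on $V(\vec w)/N$ over $\{\vec w\ge\vec 0:\sum_i w_i=N\}$---but they diverge in how that lower bound is produced.

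The paper does not use the piecewise-linear relaxation $\tilde g$ or the loss function $\ell$ at all. Instead it exhibits a concrete (suboptimal) rounding $\bar{\vec n}$: floor every coordinate, then add $1$ to the $N-\bar N$ coordinates with largest fractional parts, where $\bar N=\sum_i\lfloor w_i^*\rfloor$. Elementary concavity bounds give $\bar n_i^p(w_i^*)^{1-p}\ge\lfloor w_i^*\rfloor$ (resp.\ $\ge\lfloor w_i^*\rfloor+f_i^{1-p}$) according to the two cases, hence $V(\vec w)\ge\bar N+\sum_{j\le N-\bar N} f_{(j)}^{1-p}$. A separate majorization lemma (Lemma~\ref{lemma:minsummax}(i), proved via Schur-concavity) handles the fractional-part sum, yielding $V(\vec w)\ge \bar N+(N-\bar N)\big((N-\bar N)/s\big)^{1-p}$, and a final scalar minimization of $g(u)=u+(N-u)\big((N-u)/s\big)^{1-p}$ over $u\in[0,N]$ produces the two branches of $F$.

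Your route replaces the explicit rounding and Lemma~\ref{lemma:minsummax} by the resource-allocation identity plus Jensen on the concave majorant $\hat\ell$; this is conceptually cleaner and makes the constant $\kappa=(1-p)(2-p)^{-(2-p)/(1-p)}$ appear transparently as $\max_{[0,1]}\ell$. The price is exactly the step you flag: that $\kappa$ is the \emph{global} maximum of $\ell$ on $[0,\infty)$. The paper never faces this issue because, having split off the integer parts, it works only with fractional parts lying in $[0,1)$. Your claim is true---for $w=k+t\in[k,k+1]$ the concavity of $x\mapsto x^{1-p}$ gives $\ell(w)\le t(1-t)\big[2k+1-k^p(k+1)^{1-p}-k^{1-p}(k+1)^p\big]$, and one then checks this bracket is maximal at $k=1$ and that $\tfrac14(3-2^p-2^{1-p})\le\kappa$---but establishing it is comparable in effort to the paper's Lemma~\ref{lemma:minsummax}, so the two routes trade one technical ingredient for another of similar weight.
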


\begin{figure}[ht]
\begin{center}
\begin{tabular}{cc}
 \includegraphics[width=0.5\textwidth]{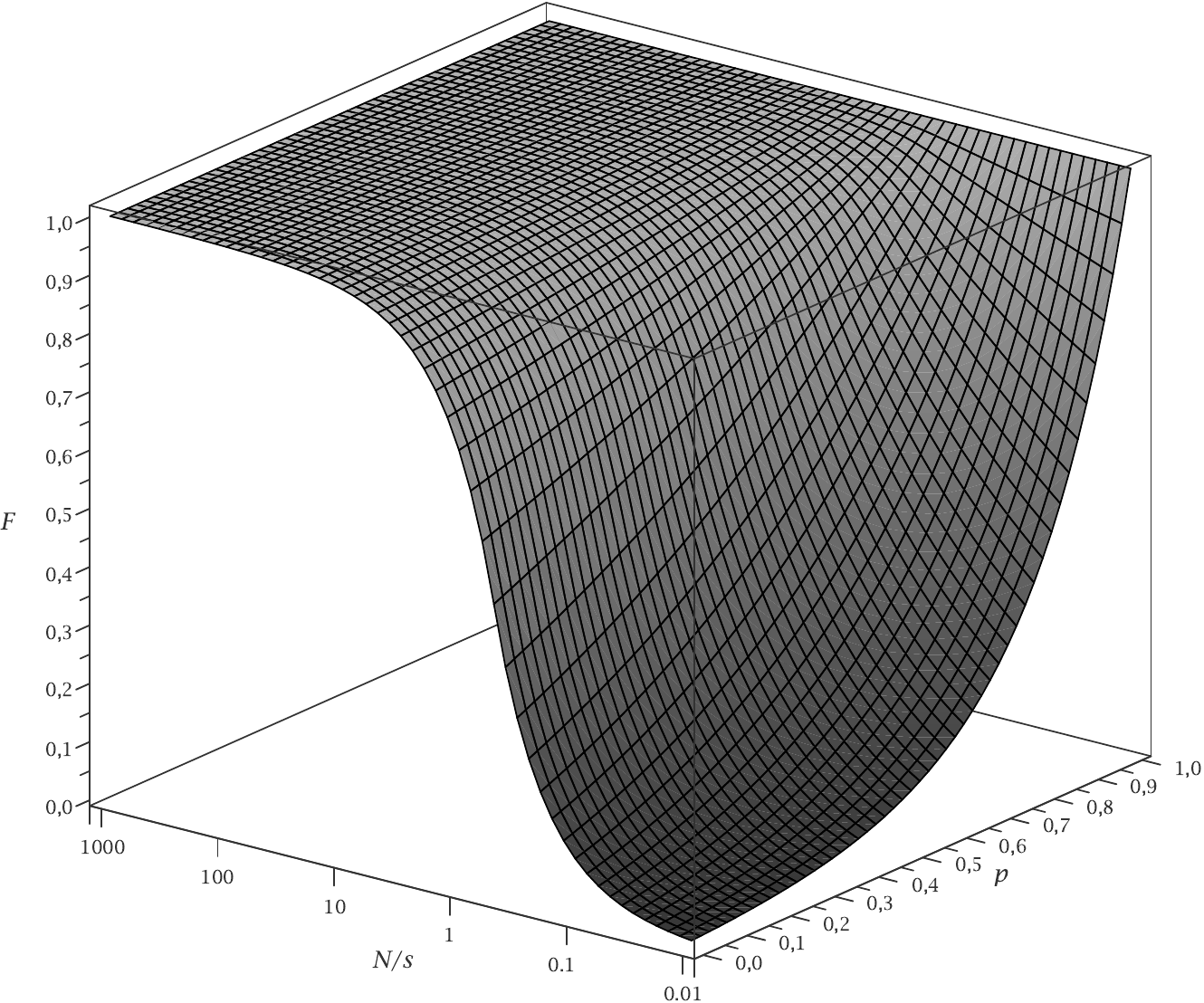} &
 \includegraphics[width=0.45\textwidth]{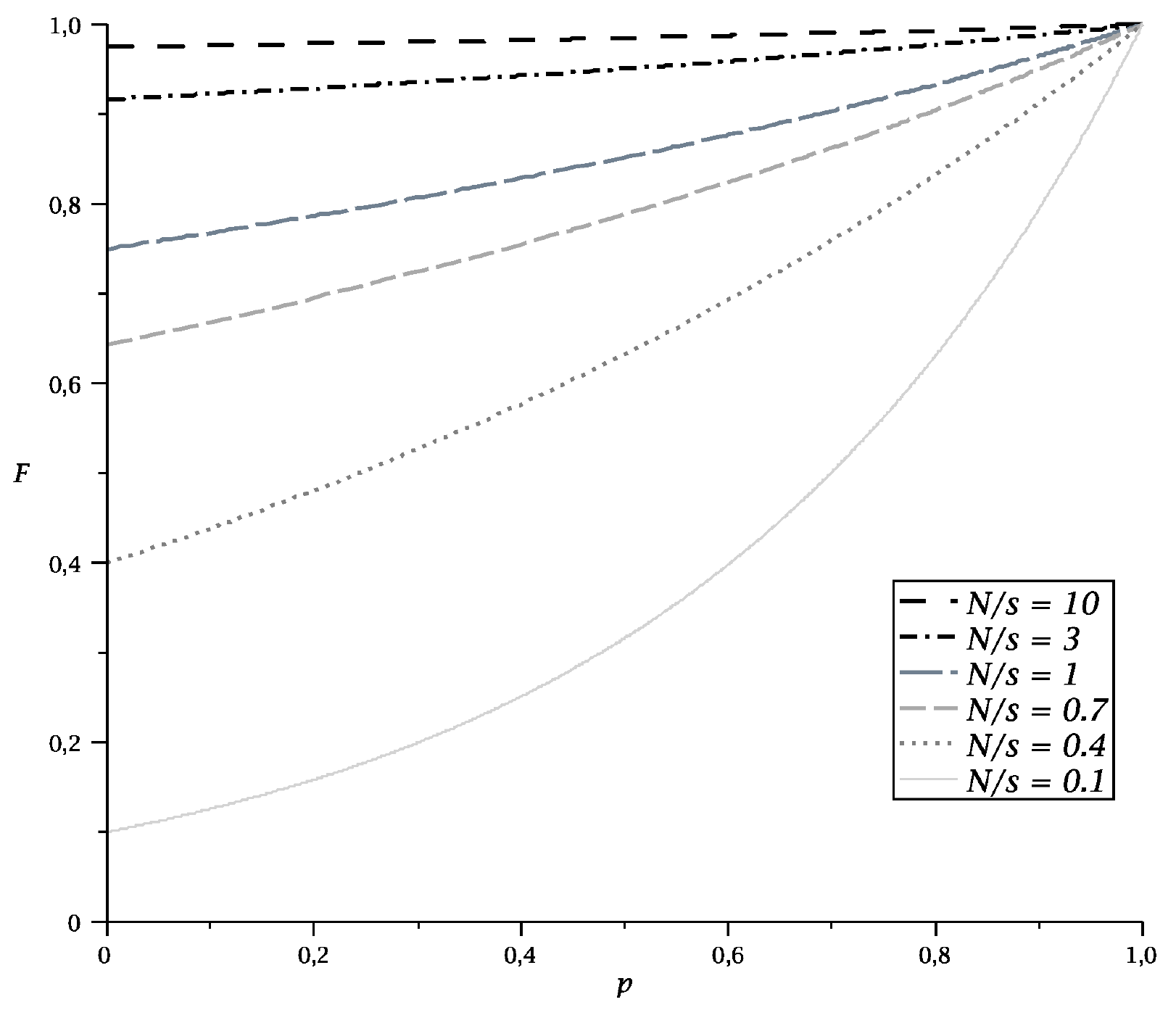}\\
(a) & (b)
\end{tabular}
\end{center}
\caption{Approximation factor F of Theorem~\ref{theo:factorF}: (a) as a function
of $p$ and the ratio $\frac{N}{s}$ (log scale); (b) as a function
of $p$ for selected values of $\frac{N}{s}$. \label{fig:F3d}}
\end{figure}

\begin{proof}
For all $i\in [s]$ we denote by $f_i:=w_i^*-\lfloor w_i^* \rfloor$ the fractional
part of $w_i^*$, and we assume without loss of generality that these numbers
are sorted, i.e.\ , $f_1\geq f_2 \geq \ldots \geq f_s$.
We will prove the theorem through a simple (suboptimal) rounding $\vec{\bar{n}}$,
which we define as follows: 
$$\bar{n}_i = \left\{ \begin{array}{cl}
                       \lfloor w_i^* \rfloor +1 & \quad \textrm{if } i \leq N - \sum_{i \in [s]} \lfloor w_i^* \rfloor; \\
		       \lfloor w_i^* \rfloor & \quad \textrm{Otherwise.}
                      \end{array} \right.
$$
We know from Proposition~\ref{prop:boundW_n} and from the fact that
Algorithm~\ref{algo:greedyrounding} solves Problem~\eqref{probaposteriori}
the integer vector $\vec{n}$
satisfies
\begin{equation}
N \frac{\varphi_p(\vec{n})}{\varphi_p(\vec{w^*})} \geq
\sum_{i=1}^s n_i^p (w_i^*)^{1-p} \geq \sum_{i=1}^s \bar{n}_i^p (w_i^*)^{1-p}.
\label{ineqnbar}
\end{equation}
We shall now bound from below the latter expression:
if $\bar{n}_i =\lfloor w_i^* \rfloor$ and $\lfloor w_i^* \rfloor \neq 0$, then 
\begin{equation}
\bar{n}_i^p (w_i^*)^{1-p} = \lfloor w_i^* \rfloor \left( \frac{w_i^*}{\lfloor w_i^* \rfloor} \right)^{1-p} \geq \lfloor w_i^* \rfloor. \label{ineqicase1} 
\end{equation}
Note that Inequality~\eqref{ineqicase1} also holds if $\bar{n}_i =\lfloor w_i^* \rfloor=0$.
If $\bar{n}_i =\lfloor w_i^* \rfloor + 1$, we write
\begin{equation}
\bar{n}_i^p (w_i^*)^{1-p} = \underbrace{\left(\frac{w_i^*}{\bar{n}_i}\right)^{1-p} + \ldots +  \left(\frac{w_i^*}{\bar{n}_i}\right)^{1-p}}_{\bar{n}_i\textrm{  terms}}
\geq \underbrace{1^{1-p} + \ldots +1^{1-p}}_{
\lfloor w_i^* \rfloor\textrm{ terms}} + f_i^{1-p}
=\lfloor w_i^* \rfloor + f_i^{1-p},
\label{ineqicase2}
\end{equation}
where the inequality is a consequence of the concavity of
$\vec{w} \mapsto \sum_j w_j^{1-p}$. Combining Inequalities~\eqref{ineqicase1}
and~\eqref{ineqicase2} yields
$$\sum_{i=1}^s \bar{n}_i^p (w_i^*)^{1-p} \geq \sum_{i=1}^s \lfloor w_i^* \rfloor
+ \sum_{j=1}^{N-\sum_{i=1}^s \lfloor w_i^* \rfloor} f_i^{1-p}= \bar{N} + 
\sum_{j=1}^{N-\bar{N}} f_i^{1-p},$$
where we have set $\bar{N}:=\sum_{i=1}^s \lfloor w_i^* \rfloor \in \{\max(N-s+1,0),\ldots,N\}$.
Since the vector $\vec{f}=[f_1,\ldots,f_s]$ sums to $N-\bar{N}$,
we can apply the result of Lemma~\ref{lemma:minsummax} with condition $(i)$,
with $r=N-\bar{N}$, and we obtain
$$\sum_{i=1}^s \bar{n}_i^p w_i^{1-p} \geq \bar{N} + (N-\bar{N}) \left( \frac{N-\bar{N}}{s} \right)^{1-p} \geq \min_{u\in [0,N]} u + (N-u) \left( \frac{N-u}{s} \right)^{1-p}.$$

We will compute this lower bound in closed-form, which will provide the approximation
bound of the theorem. To do this, we define the function
$g: u \mapsto u + (N-u) \left( \frac{N-u}{s} \right)^{1-p}$
on $]-\infty,N]$, and we observe (by differentiating) that
$g$ is decreasing on $]-\infty,u^*]$ and increasing on $[u^*,N[$, where
$$u^* = N - s \left(\frac{1}{2-p}\right)^{\frac{1}{1-p}}.$$
Hence, only two cases can appear: either $u^*\leq 0$, and the
minimum of $g$ over $[0,N]$ is attained for $u=0$; or $u^*\geq 0$,
and $g_{|[0,N]}$ attains its minimum at $u=u^*$. Finally, the bound given in
this theorem is either $N^{-1} g(0)$ or $N^{-1} g(u^*)$,
depending on the sign of $u^*$.
In particular, since the function $$h: p \mapsto \left(\frac{1}{2-p}\right)^{\frac{1}{1-p}}$$
is nonincreasing
on the interval $[0,1]$, with $h(0)=\frac{1}{2}$ and $h(1)=e^{-1}$,
we have:
$$\forall p\in[0,1], \quad \frac{N}{s}\leq e^{-1} \Longrightarrow u^* \leq 0
\quad\textrm{ and }\quad  \frac{N}{s}\geq \frac{1}{2} \Longrightarrow u^* \geq 0.$$
\end{proof}

\begin{remark}
The alternative rounding $\vec{\tilde{n}}$ is very useful to obtain the
formula of Theorem~\ref{theo:factorF}. However, since $\vec{\tilde{n}}$
differs from the design $\vec{n}$ returned by Algorithm~\ref{algo:greedyrounding}
in general, the inequality $\frac{\varphi_p(\vec{n})}{\varphi_p(\vec{w^*})} \geq F$ is not tight. Consider for example
the situation where $p=0$ and $N=s$, which is a trivial case
for the rank optimization problem~\eqref{maxrank_intro}:
the incremental rounding algorithm always returns a design $\vec{n}$
such that $(w_i^*>0 \Rightarrow n_i>0)$, and hence the problem
is solve to optimality (the design is of full rank). In contrast, Theorem~\ref{theo:factorF}
only guarantees a factor $F=\frac{3}{4}$ for this class of instances.
\end{remark}

\begin{remark}
We point out that Theorem~\ref{theo:factorF} improves on the
greedy approximation factor $1-e^{-1}$ in many situations. The gray area 
of Figure~\ref{fig:bettergreedy} shows the values of $(\frac{N}{s},p) \in \RR^*_+ \times [0,1]$
for which the approximation guarantee is better with Algorithm~\ref{algo:greedyrounding}
than with the greedy algorithm of section~\ref{sec:submod}.
\end{remark}

\begin{figure}
 \begin{center}
  \includegraphics[width=0.6\textwidth]{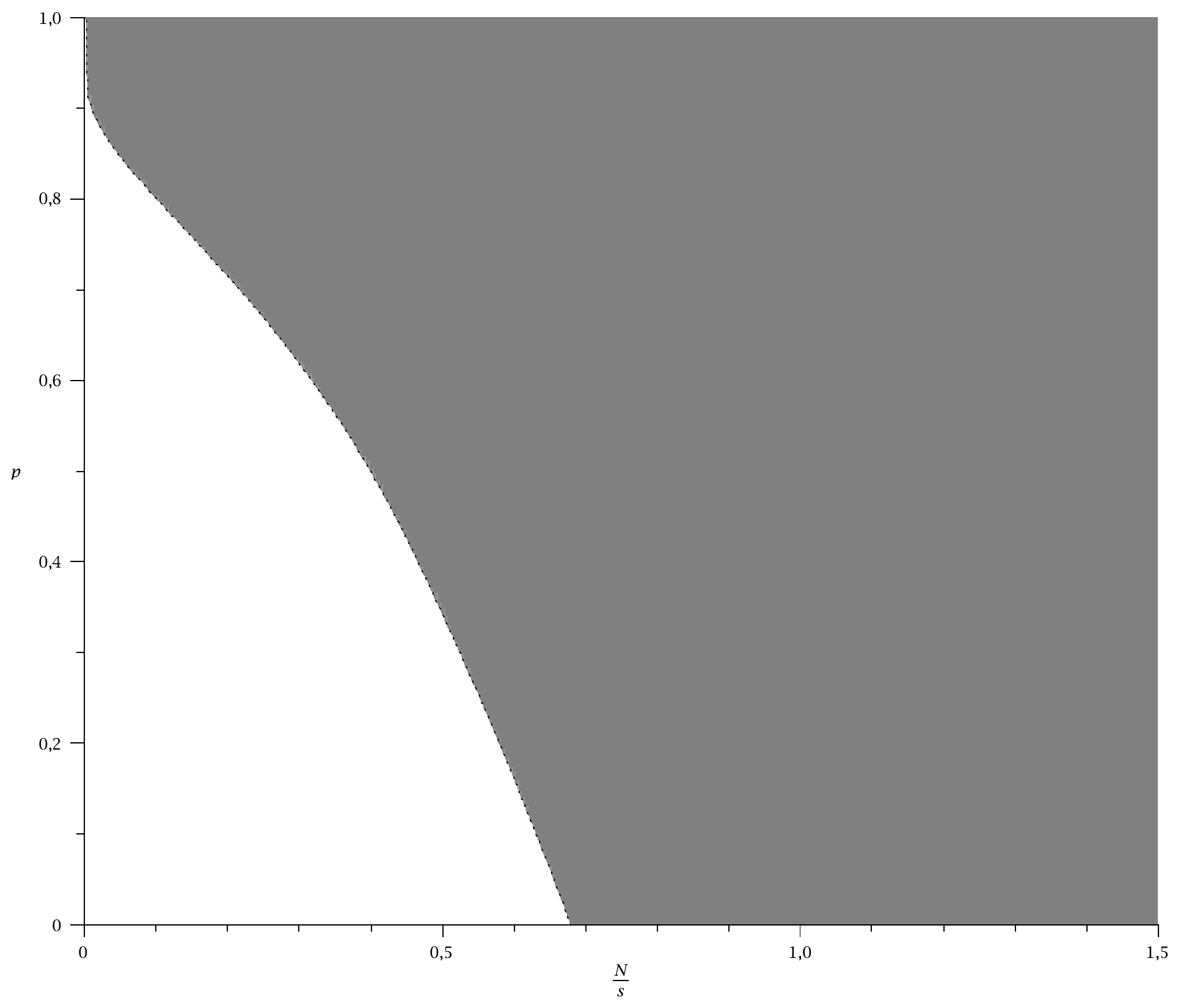}
 \end{center}
\caption{in gray, values of $(\frac{N}{s},p) \in \RR^*_+ \times [0,1]$ such that the factor $F$
of Theorem~\ref{theo:factorF} is larger than $1-e^{-1}$. \label{fig:bettergreedy}}
\end{figure}

\begin{remark}
Recall that the relevant criterion for the theory of optimal design
is the \emph{positively homogeneous} function $\vec{w} \mapsto \Phi_p\big(M_F(\vec{w})\big) = m^{-1/p} \varphi_p(\vec{w})^{1/p}$ (cf.\ Equation~\eqref{phip}).
Hence, if a design is within a factor $F$ of the optimum with respect to $\varphi_p$,
its $\Phi_p-$efficiency is $F^{1/p}$. 
In the \emph{overinstrumented} case $N>s$, Pukelsheim gives a rounding procedure
with a $\Phi_p-$efficiency of $1-\frac{s}{N}$ (Chapter~12 in~\cite{Puk93}).
We have plotted in Figure~\ref{fig:betterpuk} the area of the domain
$(\frac{s}{N},p)\in[0,1]^2$ where the approximation guarantee of Theorem~\ref{theo:factorF} is better.
\end{remark}

\begin{figure}
 \begin{center}
  \includegraphics[width=0.6\textwidth]{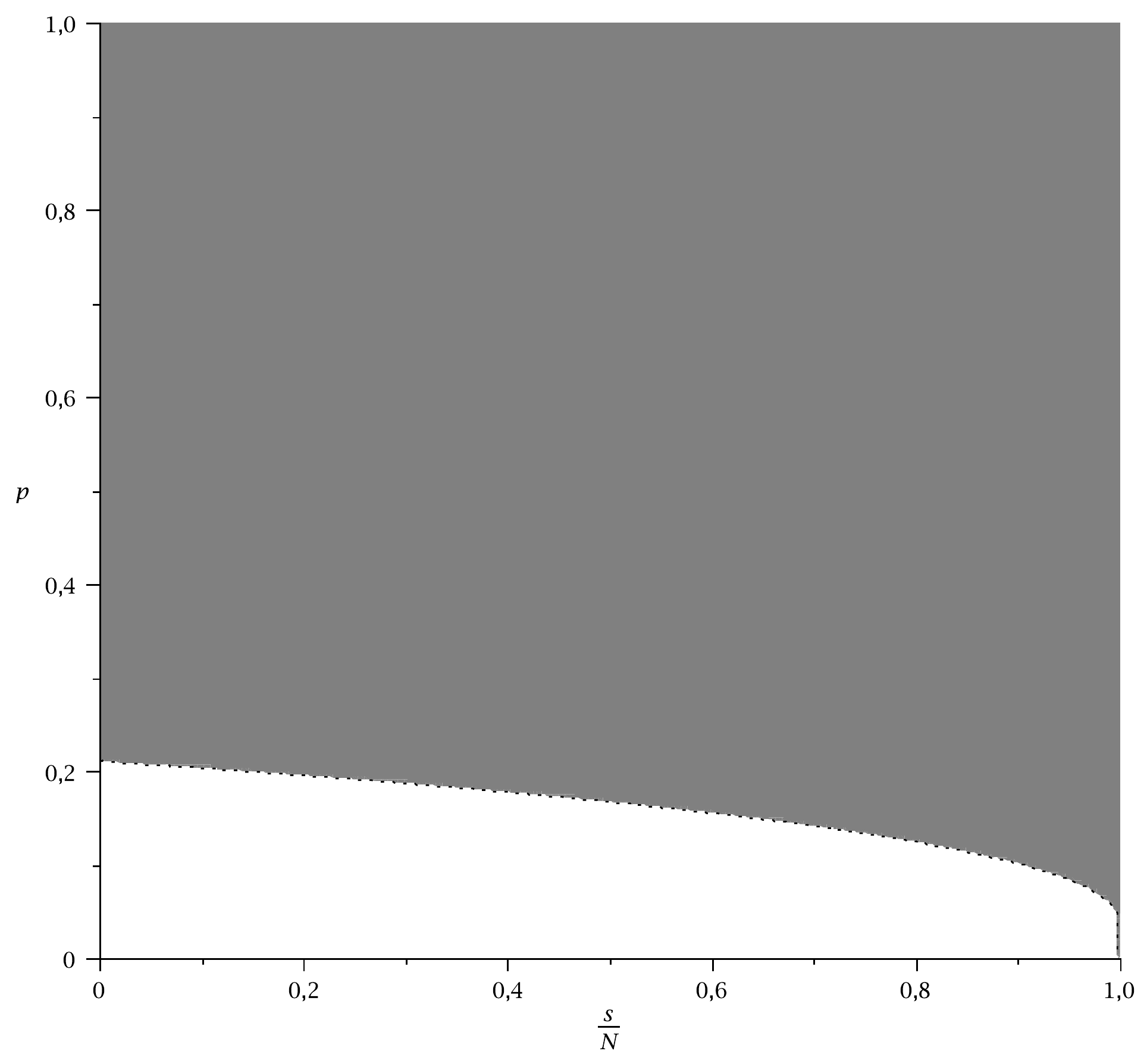}
 \end{center}
\caption{in gray, values of $(\frac{s}{N},p)\in[0,1]^2$ such that
the factor $F$ of Theorem~\ref{theo:factorF} is larger than $(1-s/N)^p$. \label{fig:betterpuk}}
\end{figure}

\section{Conclusion}

This paper gives bounds on the behavior of some classical heuristics used for combinatorial problems arising in optimal experimental design.
Our results can either justify or discard the use of such heuristics, depending on the settings of the instances considered.
Moreover, our results confirm some facts that had been observed in the literature, namely 
that rounding algorithms perform better if the density of measurements is high, and that
the greedy algorithm always gives a quite good solution. We illustrate these observations with two examples:

In a sensor location problem, Uci\'{n}ski and Patan~\cite{UP07} noticed that the trimming of a Branch and Bound
algorithm was better if they activated more sensors,
although this led to a much larger search space. The authors claims that this surprising result can be explained
by the fact that a higher density of sensors leads to a better continuous relaxation. 
This is confirmed by our result of approximability,
which shows that the larger is the number of selected experiments, the better is the quality of the rounding.

It is also known
that the greedy algorithm generally gives very good results for the optimal design of experiments
(see e.g.~\cite{SQZ06}, where the authors
explicitly chose not to implement a local search from the design greedily chosen, since the greedy algorithm already performs very well).
Our $(1-1/e)-$approximability
result guarantees that this algorithm always well behaves indeed.

\section{Acknowledgment}

The author wishes to thank St\'ephane Gaubert for his useful comments and advises,
for suggesting to work within the framework of matrix inequalities and submodularity,
and of course for his warm support. He also wants to thank Mustapha Bouhtou for the
stimulating discussions which are at the origin of this work.
The author expresses his gratitude to an anonymous referee of the ISCO conference --
where an announcement of some of the present
results was made~\cite{BGSagnol10ENDM}, and to three referees of DAM for precious comments and suggestions.


\newpage
\renewcommand \appendixname{}
\appendix
\noindent{\textbf {\Large Appendix}}

\section{From optimal design of statistical experiments to Problem~\texorpdfstring{\eqref{Pp_intro}}{(Pp)}} \label{sec:problemStatement}

\subsection{The classical linear model}

We denote vectors by bold-face lowercase letters and we make use of the classical
notation $[s]:=\{1,\ldots,s\}$ (and we define $[0]:=\emptyset$). The set of nonnegative (resp.\ positive) real numbers is
denoted by $\RR_+$ (resp.\ $\RR_+^*$), the set of $m\times m$ symmetric
(resp.\ symmetric positive semidefinite, symmetric positive definite) is denoted
by $\mathbb{S}_m$ (resp.\ $\mathbb{S}_m^+$, $\mathbb{S}_m^{++}$). The expected value of a random variable $X$ is denoted by $\EE[X]$.

We denote by $\vec{\theta} \in \RR^m$ the vector of the parameters that we want to estimate. In accordance with the
classical linear model, we assume that the experimenter has a collection
of $s$ experiments at his disposal,
each one providing a (multidimensional) observation which is a linear combination of the parameters,
up to a noise on the measurement whose covariance matrix is known and positive definite.
In other words, for each experiment $i \in [s]$, we have 
\begin{equation}
 \vec{y_i}=A_{i} \vec{\theta} + \vec{\epsilon_i},\qquad \EE[\vec{\epsilon_i}]=\vec{0}, \qquad \EE[\vec{\epsilon_i}\vec{\epsilon_i}^T]=\Sigma_i, \label{mesEq}
\end{equation}
where $\vec{y_i}$ is the vector of measurement of size $l_i$, $A_{i}$ is a $(l_i \times m)-$matrix, and $\Sigma_i\in \mathbb{S}_{l_i}^{++}$ is a known covariance matrix.
We will assume that the noises have unit variance 
for the sake of simplicity: $\Sigma_i=I$. We may always reduce to this case by a left 
multiplication of the observation equation~\eqref{mesEq} by $\Sigma_i^{-1/2}$. The errors on the measurements are assumed to be mutually independent, i.e.
$$ i \neq j \Longrightarrow \EE[\vec{\epsilon_i} \vec{\epsilon_j}^T]=0.$$

As explained in the introduction, the aim of experimental design theory is to
choose how many times each experiment will be performed
so as to maximize the accuracy of the estimation of $\vec{\theta}$,
with the constraint that
$N$ experiments may be conducted. We therefore define the integer-valued \textit{design}
variable $\vec{n}\in\mathbb{N}^s$, where $n_k$ indicates how many times the experiment $k$
is performed. We denote by $i_k \in [s]$ the index of the $k\th$ conducted experiment (the order
in which we consider the measurements has no importance),
so that the aggregated vector of observation reads:
\begin{equation}
 \vec{y}=\mathcal{A}\ \vec{\theta} + \vec{\epsilon},
\end{equation}
\begin{equation*}
\textrm{ where   }  \vec{y}=\left[
\begin{array}{c}
\vec{y_{i_1}} \\ 
\vdots \\ 
\vec{y_{i_N}}
\end{array} \right],\qquad  \mathcal{A}=
\left[
\begin{array}{c}
A_{i_1} \\ 
\vdots \\ 
A_{i_N}
\end{array}
\right],\qquad  \EE[\vec{\epsilon}]=\vec{0},\quad \mathrm{and}\quad \EE[\vec{\epsilon} \vec{\epsilon}^T]=I.
\end{equation*}

Now, assume that we have enough measurements, so that $\mathcal{A}$ is of full rank. A common result in the field of statistics, known as the \textit{Gauss-Markov} theorem, states that the best linear unbiased estimator of $\vec{\theta}$ is given by a pseudo inverse formula. Its variance is given below:
\begin{align}
\hat{\vec{\theta}}  =\Big(\mathcal{A}^T \mathcal{A} \Big)^{-1} \mathcal{A}^T \vec{y}. \label{bestEstimator} \\
\textrm{Var}(\hat{\vec{\theta}})  =(\mathcal{A}^T \mathcal{A} )^{-1}. \label{bestVar}
\end{align}

We denote the inverse of the covariance matrix~\eqref{bestVar} by $M_F(\vec{n})$,
because in the Gaussian case it coincides with the Fisher information matrix
of the measurements.
Note that it can be decomposed as the sum of the information
matrices of the selected experiments:
\begin{eqnarray}
M_F(\vec{n}) &=& \mathcal{A}^T \mathcal{A} \nonumber \\
&=&\sum_{k=1}^N A_{i_k}^T A_{i_k} \nonumber \\
&=&\sum_{i=1}^s n_i A_{i}^T A_{i}. \label{fisher} 
\end{eqnarray}
The classical experimental design approach consists in choosing the design $\vec{n}$
in order to make the variance of the estimator~(\ref{bestEstimator}) \textit{as small as possible}.
The interpretation is straightforward: with the assumption that the noise $\vec{\epsilon}$
is normally distributed, for every probability level $\alpha$,
the estimator $\hat{\vec{\theta}}$ lies in the confidence ellipsoid centered
at $\vec{\theta}$ and defined by the following inequality:
\begin{align}
\label{e-confidence}
(\vec{\theta}-\hat{\vec{\theta}})^T Q(\vec{\theta}-\hat{\vec{\theta}}) \leq \kappa_\alpha,
\end{align}
where $\kappa_\alpha$ depends on the specified probability level, and $Q=M_F(\vec{n})$
is the inverse of the covariance matrix $\textrm{Var}(\hat{\vec{\theta}})$.
We would like to make these confidence ellipsoids \textit{as small as possible},
in order to reduce the uncertainty on the estimation of $\vec{\theta}$.
To this end, we can express the inclusion of ellipsoids in terms of matrix inequalities.
The space of symmetric matrices is equipped with the {\em L\"owner ordering}, which is defined by
$$ \forall B,C \in \mathbb{S}_m, \qquad B \succeq C \Longleftrightarrow B-C \in \mathbb{S}_m^+. $$
Let $\vec{n}$ and $\vec{n'}$ denote two designs such that the matrices $M_F(\vec{n})$
and $M_F(\vec{n'})$ are invertible. 
One can readily check that for any value of the probability level $\alpha$,
the confidence ellipsoid~\eqref{e-confidence}
corresponding to $Q=M_F(\vec{n})$ is included in the confidence ellipsoid corresponding
to $Q'=M_F(\vec{n'})$ if and only if $M_F(\vec{n})\succeq M_F(\vec{n'})$.
Hence, we will prefer the design $\vec{n}$ to the design $\vec{n'}$ if the latter inequality
is satisfied.

\subsection{Statement of the optimization problem}

Since the L\"owner ordering on symmetric matrices is only a partial ordering, the problem consisting in maximizing $M_F(\vec{n})$ is ill-posed.
So we will rather maximize a scalar \textit{information function} of the Fisher matrix, i.e.\ a function mapping $\mathbb{S}_m^{+}$ onto
the real line, and which satisfies natural properties, such as positive homogeneity, monotonicity with respect to L\"owner ordering,
and concavity. For a more detailed description of the information functions, the reader is referred to the book of Pukelsheim~\cite{Puk93}, 
who makes use of the class of matrix means $\Phi_p$, as first proposed by Kiefer~\cite{Kief75}.
These functions are defined like the $L_p$-norm
of the vector of eigenvalues of the Fisher information matrix, but for $p \in [-\infty,1]$:
for a symmetric positive definite matrix $M\in\mathbb{S}_m^{++}$, $\Phi_p$ is defined by
 \begin{equation}
 \Phi_p(M)=\left\{ 
\begin{array}{ll}
\lambda_{\mathrm{min}}(M) & \textrm{for $p=-\infty$ ;} \\ 
(\frac{1}{m}\ \mathrm{trace}\ M^p)^{\frac{1}{p}} & \textrm{for $p \in\ ]-\infty,1],\ p \neq 0$;}  \\ 
(\operatorname{det}(M))^{\frac{1}{m}} & \textrm{for $p=0$,}
\end{array}
\right. \label{phip}
\end{equation}

\noindent where we have used the extended definition of powers of matrices $M^p$ for arbitrary real parameters
$p$: if $\lambda_1,\ldots,\lambda_m$ are the eigenvalues of $M$
counted with multiplicities, $\mathrm{trace}\ M^p=\sum_{j=1}^m \lambda_j^p$.
For singular positive semi-definite matrices $M \in \mathbb{S}_m^{+}$, $\Phi_p$ is defined by continuity:

\begin{equation}
 \Phi_p(M)=\left\{ 
\begin{array}{ll}
0 & \textrm{for $p \in [-\infty,0]$ ;}  \\ 
(\frac{1}{m}\ \mathrm{trace}\ M^p)^{\frac{1}{p}} & \textrm{for $p \in\ ]0,1]$. }
\end{array}
\right. \label{phising}
\end{equation}

The class of functions $\Phi_p$
includes as special cases the classical optimality criteria used in the experimental
design literature, namely $E-$optimality for $p=-\infty$
(smallest eigenvalue of $M_F(\vec{n})$),
 $D-$optimality for $p = 0$ (determinant
of the information matrix), $A-$optimality for $p=-1$
(harmonic average of the eigenvalues), and $T-$optimality for $p=1$ (trace).
The case $p=0$ (D-optimal design) admits a simple geometric interpretation:
the volume of the confidence ellipsoid~\eqref{e-confidence}
is given by $C_m\kappa_\alpha^{m/2}\det(Q)^{-1/2}$ where
$C_m>0$ is a constant depending only on the dimension.
Hence, maximizing $\Phi_0(M_F(\vec{n}))$
is the same as minimizing the volume of every confidence ellipsoid.

We can finally give a mathematical formulation to the problem of selecting
$N$ experiments to conduct among the set $[s]$:

\begin{align}\label{optdesProblem}
 \max_{n_i \in \mathbb{N}\ (i=1,\ldots,s)} &\quad \Phi_p \Big( \sum_{i=1}^s n_i A_i^T A_i \Big) \\
 \operatorname{s.t.} &\quad \sum_i n_i \leq N, \nonumber
\end{align}

\subsection{The underinstrumented situation}

We note that the problem of maximizing the information matrix $M_F(\vec{n})$ with respect to the L\"owner ordering remains
meaningful even when $M_F(\vec{n})$ is not of full rank (the interpretation of $M_F(\vec{n})$
as \textit{the inverse of the covariance matrix of the best linear unbiased estimator} vanishes,
but $M_F(\vec{n})$ is still the Fisher information matrix of the experiments if the measurement errors are Gaussian).
This case does arise in \emph{underinstrumented situations}, in which some constraints may not
allow one to conduct a number of experiments which
is sufficient to infer all the parameters.

An interesting and natural idea to find an optimal under-instrumented design is to choose the design which maximizes
the rank of the observation matrix $\mathcal{A}$, or equivalently
of $M_F(\vec{n})=\mathcal{A}^T \mathcal{A}$.
The \textit{rank maximization} is a nice combinatorial problem, where we are looking for a
subset of matrices whose sum is of maximal rank:
\begin{align*} 
 \max_{\vec{n} \in \mathbb{N}^s} &\quad \rank \Big( \sum_i n_i A_{i}^T A_{i} \Big) \\
  \operatorname{s.t.}\ & \qquad \sum_i n_i \leq N. \nonumber
\end{align*}

When every feasible information matrix is singular, Equation~\eqref{phising} indicates that the maximization of
$\Phi_p(M_F(\vec{n}))$ can be considered only for nonnegative values of $p$. Then,
the next proposition shows that
$\Phi_p$ can be seen as a deformation of the rank criterion for $p \in ]0,1]$.
First notice that when $p>0$, the maximization of $\Phi_p(M_F(\vec{n}))$ is equivalent to:
\begin{align*} 
\displaystyle{\max_{\vec{n} \in \mathbb{N}^s}} & \quad
    \varphi_p\big(\vec{n}\big):=\ \textrm{trace}\: \Big(\displaystyle{\sum_i} n_i A_i^T A_i \Big)^p \\
 \st & \quad \quad\displaystyle{\sum_i}\ n_i \leq N. \nonumber
\end{align*}
If we set $M_i=A_i^T A_i$, we obtain the problems~\eqref{maxrank_intro} and~\eqref{Pp_intro}
which were presented in the first lines of this article.

\begin{proposition} \label{limp0} For all positive semidefinite matrix $M \in \mathbb{S}_m^+,$ 
\begin{equation} 
\lim_{p\rightarrow0^+} \mathrm{trace}\ M^p = \mathrm{rank}\ M.
\end{equation}
\end{proposition}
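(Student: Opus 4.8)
The plan is to diagonalize $M$ and reduce the statement to an elementary scalar limit applied to each eigenvalue. Since $M \in \mathbb{S}_m^+$ is symmetric, it admits a spectral decomposition $M = U \Diag(\lambda_1,\ldots,\lambda_m) U^T$ with each $\lambda_k \geq 0$, and by the definition of the matrix power $M^p$ recalled in the introduction we have $\trace M^p = \sum_{k=1}^m \lambda_k^p$. This turns the problem into evaluating $\lim_{p\to 0^+} \sum_{k=1}^m \lambda_k^p$, a finite sum of one-dimensional limits.

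Next I would split the eigenvalues according to whether they vanish. Let $r = \rank M$ be the number of strictly positive eigenvalues, and reorder them so that $\lambda_1,\ldots,\lambda_r > 0$ while $\lambda_{r+1} = \cdots = \lambda_m = 0$. For each $k \leq r$, writing $\lambda_k^p = e^{p \ln \lambda_k}$ and using continuity gives $\lim_{p\to 0^+} \lambda_k^p = 1$. For each $k > r$ one has $\lambda_k^p = 0^p = 0$ for every $p > 0$, so these terms contribute nothing to the limit (and this is consistent with the convention $0^0 = 0$ adopted elsewhere in the paper).

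Finally, because the sum is finite I may interchange the limit with the summation to obtain $\lim_{p\to 0^+} \trace M^p = \sum_{k=1}^r 1 + \sum_{k=r+1}^m 0 = r = \rank M$, which is the claim. There is no genuine obstacle here; the only point requiring care is the treatment of the zero eigenvalues, where one must observe that $0^p$ stays equal to $0$ throughout the limit rather than tending to $1$, and it is precisely this that makes the count come out to the rank rather than to the full dimension $m$. I would additionally note that carrying the expansion $\lambda^p = 1 + p\ln\lambda + o(p)$ one order further recovers the $\log\det$ criterion as the next term of the asymptotics, which is the refinement alluded to in Corollary~\ref{coro:SubmodExample}.
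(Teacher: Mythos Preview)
Your proof is correct and follows essentially the same approach as the paper: reduce to the eigenvalues, observe that $\lambda_k^p \to 1$ for the $r$ positive eigenvalues while the zero eigenvalues contribute nothing, and even note the next-order term $p\log\prod_k \lambda_k$ that yields the $\log\det$ refinement. The only cosmetic difference is that the paper writes the sum directly over the $r$ positive eigenvalues from the start, whereas you split the full sum explicitly.
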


\begin{proof}
Let $\lambda_1,\ldots, \lambda_r$ denote the positive eigenvalues of $M$, counted with multiplicities, so that $r$ is the rank of $M$. We have the first order expansion as $p \to 0^+$:
\begin{align}
 \mathrm{trace}\ M^p = \sum_{k=1}^r \lambda_k^p  = r + p\ \log (\prod_{k=1}^r \lambda_k) + \mathcal{O}(p^2) \label{expansion1}
\end{align}
\end{proof} 
Consequently, $\mathrm{trace}\ M^0$ will stand for $\mathrm{rank}(M)$ in the sequel and the
rank maximization problem~\eqref{maxrank_intro}
is the limit of problem~\eqref{Pp_intro} as $p \to 0^+$.

\begin{corollary} \label{corop0}
If $p>0$ is small enough, then every design $\vec{\vec{n^*}}$ which is a solution
of Problem~\eqref{Pp_intro} maximizes the rank of $M_F(\vec{n})$. Moreover, among the designs
which maximize this rank, $\vec{\vec{n^*}}$ maximizes the product of nonzero eigenvalues
of $M_F(\vec{n})$.
\end{corollary}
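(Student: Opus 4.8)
The plan is to exploit the finiteness of the feasible set together with the asymptotic expansion established in Proposition~\ref{limp0}. The feasible designs of Problem~\eqref{Pp_intro} form a \emph{finite} set $\mathcal{F}=\{\vec{n}\in\NN^s:\sum_i n_i\leq N\}$, so it suffices to compare the finitely many values $\varphi_p(\vec{n})$ as $p\to 0^+$. For each design I write $r(\vec{n}):=\rank M_F(\vec{n})$ and let $P(\vec{n})$ denote the product of the nonzero eigenvalues of $M_F(\vec{n})$ (with the convention that the empty product equals $1$). Equation~\eqref{expansion1} then gives, for every $\vec{n}\in\mathcal{F}$,
\begin{equation*}
\varphi_p(\vec{n}) = r(\vec{n}) + p\,\log P(\vec{n}) + \mathcal{O}(p^2).
\end{equation*}
Because $\mathcal{F}$ is finite, the remainder is \emph{uniform}: there exist $C>0$ and $p_1>0$ with $|\varphi_p(\vec{n})-r(\vec{n})-p\log P(\vec{n})|\leq C p^2$ for all $\vec{n}\in\mathcal{F}$ and all $p\in\,]0,p_1]$.

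First I would settle the rank claim. Set $r^*:=\max_{\vec{n}\in\mathcal{F}} r(\vec{n})$ and $A:=\{\vec{n}\in\mathcal{F}:r(\vec{n})=r^*\}$. For any $\vec{n}\in A$ and any $\vec{n}'\in\mathcal{F}\setminus A$ the rank gap is an integer $\geq 1$, so the leading term dominates the $\mathcal{O}(p)$ corrections: the uniform bound yields $\varphi_p(\vec{n})-\varphi_p(\vec{n}')\geq 1 - D p$, where $D>0$ depends only on $C$ and on the (finitely many) numbers $\log P(\cdot)$. Hence for all $p$ below some threshold this difference is strictly positive, which forces every maximizer of $\varphi_p$ to lie in $A$, i.e.\ to maximize the rank.

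It remains to compare designs within $A$. For $\vec{n},\vec{n}'\in A$ the leading terms cancel and
\begin{equation*}
\varphi_p(\vec{n})-\varphi_p(\vec{n}') = p\big(\log P(\vec{n})-\log P(\vec{n}')\big) + \mathcal{O}(p^2),
\end{equation*}
so whenever $P(\vec{n})>P(\vec{n}')$ the right-hand side is strictly positive once $p$ is small enough. Letting $P^*:=\max_{\vec{n}\in A}P(\vec{n})$ and $B:=\{\vec{n}\in A:P(\vec{n})=P^*\}$, the same finiteness argument shows that every design in $A\setminus B$ is strictly beaten by every design in $B$ for $p$ small. Taking $p_0$ to be the minimum of the finitely many thresholds produced above, I conclude that for $0<p\leq p_0$ every solution $\vec{n^*}$ of Problem~\eqref{Pp_intro} lies in $B$: it maximizes the rank and, among all rank-maximizing designs, maximizes the product of the nonzero eigenvalues.

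The only delicate point I anticipate is the \emph{uniformity} of the error terms across $\mathcal{F}$, which is what lets a single threshold $p_0$ serve for all pairwise comparisons simultaneously; this is precisely where finiteness of the feasible set is essential, since for an infinite family the $\mathcal{O}(p^2)$ constants could blow up and the lexicographic ordering $(r,\log P)$ might fail to be reproduced by $\varphi_p$. Everything else reduces to the elementary observation that a lexicographic order on a finite value set is recovered by the perturbed objective for small $p$.
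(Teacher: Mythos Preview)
Your argument is correct and follows exactly the approach of the paper: use the expansion~\eqref{expansion1} together with the finiteness of the feasible set to deduce that for sufficiently small $p>0$ the objective $\varphi_p$ reproduces the lexicographic order on the pairs $\big(\rank M_F(\vec{n}),\ \log P(\vec{n})\big)$. The paper states this in one sentence, whereas you spell out the uniform-remainder and pairwise-threshold steps explicitly; no new idea is involved.
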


\begin{proof} 
Since there is only a finite number of designs, it follows from~\eqref{expansion1} that for $p>0$ small enough, every design which maximizes $\varphi_p$
must maximize in the lexicographical order first the rank of $M_F(\vec{n})$,
and then the pseudo-determinant $\prod_{\{k:\lambda_k>0\}} \lambda_k$.
\end{proof}

\section{Proof of Proposition~\ref{prop:boundW}} \label{sec:proofIneq}

The proof of Proposition~\ref{prop:boundW} relies on several lemmas on the directional derivative of
a scalar function applied to a symmetric matrix, which we state next.
First recall that if $f$ is differentiable on $\RR_+^*$, then $f$ is Fr\'echet differentiable
over $\mathbb{S}_m^{++}$,
and for $M\in\mathbb{S}_m^{++}$, $H\in\mathbb{S}_m$, we denote by $D\!f(M)(H)$
its directional derivative at $M$ in the direction of $H$ (see Equation~\eqref{Dfrechet}).

\begin{lemma} \label{lemma:permDeriv}
 If $f$ is continuously differentiable on $\Rplusstar$, i.e.\ $f\in\mathcal{C}^1(\Rplusstar)$,
$M\in\Splusplus, A,B \in \mathbb{S}_m$, then
$$\trace(A\ D\!f(M)(B))=\trace(B\ D\!f(M)(A)).$$
\end{lemma}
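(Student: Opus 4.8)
The plan is to reduce the statement to an explicit computation in the eigenbasis of $M$, using the classical Daleckii--Krein formula for the Fréchet derivative of the spectral map $X \mapsto f(X)$. Write a spectral decomposition $M = U\,\Diag(\lambda_1,\dots,\lambda_m)\,U^*$ with $U$ unitary, and let $\Gamma \in \mathbb{S}_m$ be the \emph{divided-difference} (L\"owner) matrix
$$\Gamma_{ij} = \frac{f(\lambda_i)-f(\lambda_j)}{\lambda_i-\lambda_j}\ \ (\lambda_i\neq\lambda_j),\qquad \Gamma_{ii}=f'(\lambda_i).$$
Since $f\in\mathcal{C}^1(\RR_+^*)$ and $M\in\Splusplus$, the map $X\mapsto f(X)$ is Fréchet differentiable at $M$ and its derivative is $D\!f(M)(H)=U\,(\Gamma\circ(U^*HU))\,U^*$, where $\circ$ denotes the Hadamard product. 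The decisive structural fact is that $\Gamma$ is symmetric, since the divided difference is symmetric in its two arguments.

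First I would pass to the eigenbasis by setting $\tilde A:=U^*AU$ and $\tilde B:=U^*BU$, which are again symmetric because $A,B\in\mathbb{S}_m$ and $U$ is unitary. Using the cyclic invariance of the trace together with the formula above,
$$\trace\big(A\,D\!f(M)(B)\big)=\trace\big(\tilde A\,(\Gamma\circ\tilde B)\big)=\sum_{i,j}\tilde A_{ij}\,\Gamma_{ji}\,\tilde B_{ji}=\sum_{i,j}\tilde A_{ij}\,\Gamma_{ij}\,\tilde B_{ij},$$
where the last step uses the symmetry of $\Gamma$ and of $\tilde B$. Running the identical computation on $\trace\big(B\,D\!f(M)(A)\big)$ produces $\sum_{i,j}\tilde B_{ij}\,\Gamma_{ij}\,\tilde A_{ij}$, which is manifestly the same sum. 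This yields the claimed identity; in effect the lemma is just the statement that the linear map $H\mapsto D\!f(M)(H)$ is self-adjoint for the trace inner product on $\mathbb{S}_m$.

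The only delicate point is justifying the derivative formula at the stated regularity: the paper's proof of Proposition~\ref{prop:ineqPropos} records $D(\trace f(X))(H)=\trace(f'(X)H)$ only for analytic $f$, whereas here $f$ is merely $\mathcal{C}^1$. I would either invoke the Daleckii--Krein theorem directly (valid for $f\in\mathcal{C}^1$ on an interval containing the spectrum of $M$, with repeated eigenvalues handled by the diagonal entries $\Gamma_{ii}=f'(\lambda_i)$), or bypass the explicit formula entirely. For the latter route, choose an antiderivative $g$ of $f$, so that $g\in\mathcal{C}^2(\RR_+^*)$, and set $\phi(s,t):=\trace\,g(M+sA+tB)$, which is $\mathcal{C}^2$ near the origin. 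Then $\partial_t\phi=\trace\big(f(M+sA+tB)\,B\big)$, whence $\partial_s\partial_t\phi|_{0}=\trace\big(D\!f(M)(A)\,B\big)$, and symmetrically $\partial_t\partial_s\phi|_{0}=\trace\big(D\!f(M)(B)\,A\big)$; Schwarz's theorem on equality of mixed partials forces these to coincide. I expect the bookkeeping around the divided-difference matrix (symmetry of $\Gamma$, repeated eigenvalues) to be the only place requiring care, and the mixed-partials argument sidesteps it entirely, so I would likely present that as the cleanest write-up.
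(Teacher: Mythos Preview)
Your primary argument is essentially the paper's own proof: both pass to the eigenbasis of $M$, invoke the Daleckii--Krein/divided-difference formula $D\!f(M)(H)=Q\big(f^{[1]}(D)\odot Q^THQ\big)Q^T$, and conclude by the symmetry of the divided-difference matrix. The only cosmetic difference is that the paper expands the Hadamard product as a sum over rank-one terms $f^{[1]}_{ij}\,\vec{q_i}\vec{q_i}^T H \vec{q_j}\vec{q_j}^T$ before taking the trace, whereas you compute directly with entries of $\tilde A,\tilde B,\Gamma$; the underlying mechanism is identical.

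Your second route via mixed partials of $\phi(s,t)=\trace\,g(M+sA+tB)$ is a genuine alternative not present in the paper. It trades the explicit eigenbasis computation for Schwarz's theorem, and has the pleasant feature of making the self-adjointness of $H\mapsto D\!f(M)(H)$ manifest as the Hessian symmetry of a scalar potential; it also cleanly handles the $\mathcal{C}^1$ regularity without appealing to the full Daleckii--Krein statement. Either write-up is fine.
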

\begin{proof}
Let $M=Q D Q^T$ be an eigenvalue decomposition of $M$. It is known (see e.g.~\cite{Bha97})
that $D\!f(M)(H)$
can be expressed as $Q (f^{[1]}(D) \odot Q^T H Q) Q^T$, where $f^{[1]}(D)$ is a symmetric
matrix called the \emph{first divided difference} of $f$ at $D$
and $\odot$ denotes the Hadamard (elementwise)
product of matrices. With little work, the latter derivative may be rewritten as:
$$D\!f(M)(H)=\sum_{i,j} f^{[1]}_{ij} \vec{q_i}\vec{q_i}^T H \vec{q_j}\vec{q_j}^T,$$
where $\vec{q_k}$ is the $k\th$ eigenvector of $M$ (i.e., the $k\th$ column of $Q$)
and $f^{[1]}_{ij}$ denotes the $(i,j)-$element of $f^{[1]}(D)$. We can now conclude:
\begin{align*}
\trace(A\ D\!f(M)(B)) & = \sum_{i,j} f^{[1]}_{ij} \trace ( A\vec{q_i}\vec{q_i}^T B \vec{q_j}\vec{q_j}^T) \\
  & = \sum_{i,j} f^{[1]}_{ji} \trace ( B\vec{q_j}\vec{q_j}^T H \vec{q_i}\vec{q_i}^T) \\
  & = \trace(B\ D\!f(M)(A))
\end{align*}
\end{proof}

We next show that when $f$ is antitone, the mapping $X\mapsto D\!f(M)(X)$ is nonincreasing
with respect to the L\"owner ordering.
\begin{lemma} \label{lemma:nonincDeriv}
 If $f$ is differentiable and antitone on $\Rplusstar$, then for all $A,B$ in $\mathbb{S}_m$,
$$A\preceq B \Longrightarrow D\!f(M)(A)\succeq D\!f(M)(B).$$
\end{lemma}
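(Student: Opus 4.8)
The plan is to reduce the two-variable monotonicity statement to a one-sided inequality by linearity, and then to read off that inequality directly from the operator antitonicity of $f$, without invoking the explicit divided-difference formula of Lemma~\ref{lemma:permDeriv}.

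First I would record that $H \mapsto D\!f(M)(H)$ is linear on $\mathbb{S}_m$: it is the Fr\'echet derivative of $X \mapsto f(X)$ at $M$, which exists over $\Splusplus$ by the differentiability of $f$ recalled at the beginning of this appendix. Writing the hypothesis $A \preceq B$ as $B - A \succeq 0$, the desired conclusion $D\!f(M)(A) \succeq D\!f(M)(B)$ is then equivalent, by linearity, to $D\!f(M)(B - A) \preceq 0$. Hence it suffices to prove the single claim: for every $H \in \mathbb{S}_m^+$, one has $D\!f(M)(H) \preceq 0$.

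To obtain this, I would fix $H \succeq 0$ and use that $M \in \Splusplus$. For every $t > 0$ we have $M + tH \succeq M \succ 0$, so $M + tH \in \Splusplus$, and operator antitonicity of $f$ on $\Rplusstar$ gives $f(M + tH) \preceq f(M)$. Dividing by $t > 0$ yields
$$\frac{1}{t}\big(f(M + tH) - f(M)\big) \preceq 0 \qquad \text{for every } t > 0.$$
Letting $t \to 0^+$, the left-hand side converges to $D\!f(M)(H)$ by definition of the directional derivative; since the cone $-\mathbb{S}_m^+$ of negative semidefinite matrices is closed, the limit stays in it, i.e.\ $D\!f(M)(H) \preceq 0$. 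Together with the linearity reduction, this proves the lemma.

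The subtle point --- and the place where one must be careful --- is the meaning of \emph{antitone}: this argument genuinely uses that $f$ is \emph{operator} antitone, not merely scalar-nonincreasing. This matches the hypotheses used throughout the paper (e.g.\ in Proposition~\ref{prop:ineqPropos}). As a sanity check, had I instead pushed the divided-difference formula $D\!f(M)(H) = Q\big(f^{[1]}(D) \odot Q^T H Q\big)Q^T$ of Lemma~\ref{lemma:permDeriv}, I would need the Loewner matrix $-f^{[1]}(D)$ to be positive semidefinite in order to apply the Schur product theorem to $Q^T H Q \succeq 0$ --- and positive semidefiniteness of $-f^{[1]}(D)$ is precisely L\"owner's criterion~\cite{Low34} for operator monotonicity of $-f$. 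The limiting argument above avoids this computation entirely, but both routes confirm that operator antitonicity is exactly the hypothesis that makes the statement true.
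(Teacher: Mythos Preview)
Your proof is correct and follows essentially the same route as the paper: both arguments use the definition of the directional derivative together with operator antitonicity and pass to the limit in the closed cone $\mathbb{S}_m^+$. The only cosmetic difference is that you first reduce via linearity to the single inequality $D\!f(M)(H)\preceq 0$ for $H\succeq 0$, whereas the paper compares $f(M+\epsilon A)$ and $f(M+\epsilon B)$ directly from $M+\epsilon A\preceq M+\epsilon B$; these are the same computation.
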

\begin{proof}
The lemma trivially follows from the definition of the directional derivative:
$$ D\!f(M)(A) = \lim_{\epsilon \to 0^+} \frac{1}{\epsilon} \big(f(M+\epsilon A) - f(M) \big)$$
and the fact that $A\preceq B$ implies $M+\epsilon A\preceq M+\epsilon B$ for all $\epsilon>0$.
\end{proof}

\begin{lemma} \label{lemma:commDeriv}
 Let $f$ be differentiable on $\Rplusstar$, $M\in \Splusplus$, $A \in \mathbb{S}_m$. If $A$ and $M$
commute, then
$$D\!f(M)(A)=f'(M)A \in \mathbb{S}_m,$$
where $f'$ denotes the (scalar) derivative of $f$.
\end{lemma}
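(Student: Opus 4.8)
The plan is to exploit the fact that two commuting symmetric matrices can be orthogonally diagonalized in a common basis, which reduces the matrix derivative to a scalar computation, eigenvalue by eigenvalue.

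First I would invoke simultaneous diagonalization: since $A$ and $M$ are symmetric and commute, there exist an orthogonal matrix $Q$ and diagonal matrices $\Lambda=\operatorname{Diag}(\lambda_1,\ldots,\lambda_m)$ and $\operatorname{Diag}(a_1,\ldots,a_m)$ such that $M=Q\Lambda Q^T$ and $A=Q\operatorname{Diag}(a_1,\ldots,a_m)Q^T$. The only point requiring care is repeated eigenvalues of $M$: one first diagonalizes $M$, then diagonalizes the (symmetric) restriction of $A$ to each eigenspace of $M$; this is standard. Because $M\in\mathbb{S}_m^{++}$, every $\lambda_i>0$, so $f$ is differentiable at each $\lambda_i$ and $M+\epsilon A$ remains positive definite for all $|\epsilon|$ small enough.

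Next I would compute the directional derivative directly from its definition~\eqref{Dfrechet}. In the common basis, $M+\epsilon A=Q\operatorname{Diag}(\lambda_i+\epsilon a_i)Q^T$, so applying $f$ spectrally gives $f(M+\epsilon A)=Q\operatorname{Diag}\big(f(\lambda_i+\epsilon a_i)\big)Q^T$. Forming the difference quotient and factoring out $Q$ and $Q^T$,
$$\frac{1}{\epsilon}\big(f(M+\epsilon A)-f(M)\big)=Q\,\operatorname{Diag}\!\left(\frac{f(\lambda_i+\epsilon a_i)-f(\lambda_i)}{\epsilon}\right)Q^T.$$
Letting $\epsilon\to 0$ and using differentiability of $f$ at each $\lambda_i>0$, each scalar entry tends to $f'(\lambda_i)\,a_i$, so $D\!f(M)(A)=Q\operatorname{Diag}\big(f'(\lambda_i)\,a_i\big)Q^T$. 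I would then recognize this limit as the claimed product: $Q\operatorname{Diag}(f'(\lambda_i))Q^T=f'(M)$ by definition of a scalar function applied spectrally, $Q\operatorname{Diag}(a_i)Q^T=A$, and these two matrices commute (both diagonal in the $Q$ basis), whence $D\!f(M)(A)=f'(M)A\in\mathbb{S}_m$.

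I expect no serious obstacle here; the only delicate points are the simultaneous diagonalization in the presence of repeated eigenvalues and the fact that spectral application of $f$ commutes with the eigendecomposition. An alternative route would reuse the divided-difference formula $D\!f(M)(H)=Q\big(f^{[1]}(\Lambda)\odot Q^T H Q\big)Q^T$ recalled in the proof of Lemma~\ref{lemma:permDeriv}: since $Q^T A Q$ is diagonal, the Hadamard product retains only the diagonal entries $f^{[1]}_{ii}=f'(\lambda_i)$, yielding the same conclusion at once. I would favor the direct computation above, however, as it is self-contained and requires nothing beyond the stated differentiability hypothesis.
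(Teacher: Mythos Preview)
Your proof is correct and follows essentially the same approach as the paper's: simultaneous orthogonal diagonalization of $M$ and $A$, followed by an entry-wise (scalar) computation of the difference quotient, and then recombination into $f'(M)A$. Your write-up is slightly more explicit about the limit of the diagonal entries and about the handling of repeated eigenvalues, but the argument is the same.
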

\begin{proof}
 Since $A$ and $M$ commute, we can diagonalize them simultaneously:
$$M=Q \Diag(\vec{\lambda}) Q^T,\quad A=Q \Diag(\vec{\mu}) Q^T.$$
Thus, it is clear from the definition of the directional derivative that
$$D\!f(M)(A)=Q\ D\!f\big(\Diag(\vec{\lambda})\big)\big(\Diag(\vec{\mu})\big)\ Q^T.$$
By reasoning entry-wise on the diagonal matrices, we find:
$$D\!f\big(\Diag(\vec{\lambda})\big)\big(\Diag(\vec{\mu})\big)=
\Diag\big(f'(\lambda_1) \mu_1,\ldots,f'(\lambda_m) \mu_m\big)
=\Diag\big(f'(\vec{\lambda})\big)\Diag(\vec{\mu})$$
The equality of the lemma is finally obtained by writing:
$$D\!f(M)(A)=Q \Diag\big(f'(\vec{\lambda})\big)\Diag(\vec{\mu}) Q^T=
Q \Diag\big(f'(\vec{\lambda})\big)Q^T Q\Diag(\vec{\mu}) Q^T=f'(M) A.$$
Note that the matrix $f'(M) A$ is indeed symmetric, because $f'(M)$ and $A$ commute.
\end{proof}

Before we give the proof of the main result, we recall an important result from the theory of
optimal experimental designs, which characterizes the optimum of Problem~\eqref{Pcont}.
\begin{proposition}[General equivalence theorem~\cite{Kief74}] \label{prop:KKT_cond}
Let $p \in [0,1]$. A design $\vec{w^*}$ is optimal for Problem~\eqref{Pcont} if and only if:
$$
\forall i \in [s],\quad N \trace(M_F(\vec{w^*})^{p-1} M_i) \leq \varphi_p\big(\vec{w^*}\big).
$$
Moreover, the latter inequalities become equalities for all $i$
such that $w_i^*>0$.
\end{proposition}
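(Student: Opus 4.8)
The statement is the classical Kiefer--Wolfowitz general equivalence theorem \cite{Kief74}, and the plan is to obtain it as the first-order optimality condition of the concave program~\eqref{Pcont}. First I would reduce to the function $\varphi_p(\vec{w})=\trace M_F(\vec{w})^p$: since $\Phi_p$ is a strictly increasing function of $\varphi_p$ (for $p>0$ one has $\Phi_p=m^{-1/p}\varphi_p^{1/p}$; the case $p=0$ is the analogous $\log\det$ statement), maximizing $\Phi_p(M_F(\vec{w}))$ over the feasible simplex $\{\vec{w}\ge\vec{0}:\ \sum_k w_k\le N\}$ is equivalent to maximizing $\varphi_p$. As $\vec{w}\mapsto M_F(\vec{w})$ is linear and $X\mapsto\trace X^p$ is concave on $\Splusplus$ for $p\in[0,1]$, this is the maximization of a concave function over a convex set, so that the first-order condition is both necessary and sufficient for optimality.

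Next I would record the gradient. Using the differentiation formula $D(\trace f(A))(B)=\trace(f'(A)B)$ already invoked in the proof of Proposition~\ref{prop:ineqPropos}, with $f(x)=x^p$, one finds $\partial_i\varphi_p(\vec{w})=p\,\trace\!\big(M_F(\vec{w})^{p-1}M_i\big)$. Two preliminary observations make this gradient usable. The budget constraint is active at any optimum, i.e.\ $\sum_k w_k^*=N$, because $\varphi_p$ is homogeneous of degree $p>0$ and increasing, so shrinking the total weight would strictly decrease the objective; and Euler's identity then yields $\sum_i w_i^*\,\trace(M_F(\vec{w}^*)^{p-1}M_i)=\varphi_p(\vec{w}^*)$.

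With these in hand the two implications are short. For necessity, the first-order condition $\langle\nabla\varphi_p(\vec{w}^*),\vec{w}-\vec{w}^*\rangle\le 0$ tested at the vertices $\vec{w}=N\vec{e}_i$ gives exactly $N\,\trace(M_F(\vec{w}^*)^{p-1}M_i)\le\varphi_p(\vec{w}^*)$; combining these inequalities with the Euler identity and $\sum_i w_i^*=N$ forces equality for every $i$ with $w_i^*>0$, since a strict inequality on the support would make the weighted sum fall strictly below $\varphi_p(\vec{w}^*)$, a contradiction. For sufficiency, I would assume the stated conditions, set $\mu=\varphi_p(\vec{w}^*)/N$, and check that for every feasible $\vec{w}$ one has $\langle\nabla\varphi_p(\vec{w}^*),\vec{w}-\vec{w}^*\rangle=p\sum_i\trace(M_F^{p-1}M_i)(w_i-w_i^*)\le p\mu\sum_i w_i-p\mu N\le 0$, using $\trace(M_F^{p-1}M_i)\le\mu$, $w_i\ge0$, $\sum_i w_i\le N$ and the Euler identity; concavity then promotes this to global optimality.

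The main obstacle is the nonsmoothness of $X\mapsto\trace X^p$ at singular matrices when $p\in(0,1)$, where $x^{p-1}\to\infty$ as $x\to0^+$ renders $M_F(\vec{w}^*)^{p-1}$, and hence the gradient, ill-defined. I would dispose of it by showing that an optimal information matrix is necessarily nonsingular: under the standing assumption that $M_F(\indic)=\sum_k M_k$ has full rank, the convex combination moving a rank-deficient $M_F(\vec{w}^*)$ toward $\tfrac{N}{s}\indic$ is full rank for every positive step $\epsilon$ and creates eigenvalues of order $\epsilon$, whose contribution $\epsilon^p$ to $\varphi_p$ has an infinite right derivative at $\epsilon=0$; thus any singular $\vec{w}^*$ admits a feasible ascent direction and cannot be optimal. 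Once $M_F(\vec{w}^*)\in\Splusplus$, the Fr\'echet derivative above is valid and the argument goes through; the limiting case $p=0$ reduces to the standard D-optimality equivalence theorem with $\varphi_0=\rank M_F(\vec{w}^*)=m$.
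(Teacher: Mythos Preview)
Your proposal is correct and follows exactly the approach the paper indicates: the paper does not give a self-contained proof but refers to \cite{Kief74} and \cite{Puk93}, noting that the statement is nothing but the KKT conditions for the concave program~\eqref{Pcont}, with the gradient formula $\partial_i\varphi_p(\vec{w})=\trace(M_F(\vec{w})^{p-1}M_i)$ (the paper drops the factor $p$, which is harmless since it cancels) and the caveat that $M_F(\vec{w}^*)$ must be invertible (for which the paper cites Paragraph~7.13 of \cite{Puk93}). Your argument fleshes out precisely this sketch---first-order variational inequality tested at the vertices $N\vec{e}_i$, Euler's identity for the complementary-slackness part, and the infinite right derivative of $\epsilon\mapsto\epsilon^p$ to rule out singular optima---so there is nothing to add.
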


For a proof of this result, see~\cite{Kief74} or Paragraph~7.19 in~\cite{Puk93}, where the problem is studied with the normalized
constraint $\sum_i w_i\leq 1$. In fact, the \emph{general equivalence theorem} details the Karush-Kuhn-Tucker
conditions of optimality of Problem~\eqref{Pcont}. To derive them, one can use the fact that
when $M_F(\vec{w})$ is invertible,
$$\frac{\partial \varphi_p(\vec{w})}{\partial w_i}=\trace(M_F(\vec{w})^{p-1} M_i)\quad \textrm{for all}\quad p\in]0,1],$$
and 
$$\frac{\partial \log\det(M_F(\vec{w}))}{\partial w_i}=\trace(M_F(\vec{w})^{-1} M_i).$$
Note that for $p\neq 1$, the proposition implicitly implies that $M_F(\vec{w^*})$ is invertible.
A proof of this fact can be found in Paragraph~7.13 of~\cite{Puk93}.

We can finally prove the main result:

\begin{proof}[Proof of Proposition~\ref{prop:boundW}]
Let $\vec{w^*}$ be an optimal solution to Problem~\eqref{Pcont} and $S$ be a subset of~$[s]$
such that $w_i^*>0$ for all $i \in S$
(the case in which $w_i^*=0$ for some index $i\in S$ will trivially follow
if we adopt the convention $0^0=0$).
We know from Proposition~\ref{prop:KKT_cond} that
$N^{-1} \varphi_p\big(\vec{w^*}\big) = \trace(M_F(\vec{w^*})^{p-1} M_i)$ for all $i$ in $S$.
If we combine these equalities by multiplying each expression
by a factor proportional to $(w_i^*)^{1-p}$, we obtain:
\begin{align}
\frac{1}{N} \varphi_p\big(\vec{w^*}\big) =
\sum_{i \in S} \frac{(w_i^*)^{1-p}}{\sum_{k \in S} (w_k^*)^{1-p}} \trace(M_F(\vec{w^*})^{p-1} M_i) \label{proporKKT}\\
\Longleftrightarrow 
\frac{1}{N} \sum_{k \in S} (w_k^*)^{1-p} =
\frac{\sum_{i\in S} (w_i^*)^{1-p} \trace(M_F(\vec{w^*})^{p-1} M_i)}{ \varphi_p(\vec{w^*})}. \nonumber
\end{align}
We are going to show that for all $\vec{w}\geq\vec{0}$ such that $M_F(\vec{w})$ is invertible,
$\sum_{i\in S} w_i^{1-p} \trace(M_F(\vec{w})^{p-1} M_i) \leq \trace(M_S)^p$,
where $M_S:=\sum_{i\in S} M_i$,
which will complete the proof.
To do this, we introduce the function $f$
defined on the open subset of $(\RR_+)^s$ such that $M_F(\vec{w})$ is invertible
by:
$$f(\vec{w})=\sum_{i\in S} w_i^{1-p} \trace(M_F(\vec{w})^{p-1} M_i)=\trace \left ( \Big(\sum_{i\in S} w_i^{1-p} M_i\Big) M_F(\vec{w})^{p-1} \right).$$
Note that $f$ satisfies the property $f(t\vec{w})=f(\vec{w})$ for all positive scalar $t$;
this explains why we do not have to work with normalized designs such that $\sum_i w_i = N$.
Now, let $\vec{w}\geq\vec{0}$ be such that $M_F(\vec{w})\succ 0$ and
let $k$ be an index of $S$ such that $w_k=\min_{i\in S} w_i$. We are first going to show that
$\frac{\partial f(\vec{w})}{\partial w_k}\geq0$.
By the rule of differentiation of a product,
\begin{align}
\frac{\partial f(\vec{w})}{\partial w_k} &= \trace \left( (1-p) w_k^{-p} M_k M_F(\vec{w})^{p-1} +
\Big(\sum_{i\in S} w_i^{1-p} M_i\Big) \frac{\partial (M_F(\vec{w})^{p-1})}{\partial w_k} \right) \nonumber\\
 &=\trace \left( (1-p) w_k^{-p} M_k M_F(\vec{w})^{p-1} +
\Big(\sum_{i\in S} w_i^{1-p} M_i\Big) D[x\mapsto x^{p-1}](M_F(\vec{w}))(M_k) \right) \nonumber\\
 &=\trace M_k \left(
(1-p) w_k^{-p} M_F(\vec{w})^{p-1} +  D[x\mapsto x^{p-1}]\big(M_F(\vec{w})\big)\big(\sum_{i\in S} w_i^{1-p} M_i)\big)
\right), \label{Mkparent}
\end{align}
where the first equality is simply a rewriting of $\frac{\partial (M_F(\vec{w})^{p-1})}{\partial w_k}$
by using a directional derivative, and the second equality follows from Lemma~\ref{lemma:permDeriv} applied to the
function $x\mapsto x^{p-1}$.
By linearity of the Fr\'echet derivative, we have:
$$w_k^p\  D[x\mapsto x^{p-1}]\big(M_F(\vec{w})\big)\big(\sum_{i\in S} w_i^{1-p} M_i\big)=
 D[x\mapsto x^{p-1}]\big(M_F(\vec{w})\big)\big(\sum_{i\in S} w_i \left(\frac{w_k}{w_i}\right)^{\!\!p} M_i\big)
.$$
Since $w_k\leq w_i$ for all $i\in S$, the following matrix inequality holds:
$$\sum_{i\in S} w_i \left(\frac{w_k}{w_i}\right)^{\!\!p} M_i\preceq \sum_{i\in S} w_i M_i \preceq M_F(\vec{w}).$$
By applying successively Lemma~\ref{lemma:nonincDeriv} ($x\mapsto x^{p-1}$ is antitone on $\Rplusstar$)
and Lemma~\ref{lemma:commDeriv} (the matrix $M_F(\vec{w})$ commutes with itself), we obtain:
\begin{align*}
 w_k^p\  D[x\mapsto x^{p-1}]\big(M_F(\vec{w})\big)\big(\sum_{i\in S} w_i^{1-p} M_i\big)
&\succeq  D[x\mapsto x^{p-1}]\big(M_F(\vec{w})\big)\big(M_F(\vec{w})\big) \\
& = (p-1) M_F(\vec{w})^{p-2} M_F(\vec{w})\\
& = (p-1) M_F(\vec{w})^{p-1}.
\end{align*}
Dividing the previous matrix inequality by $w_k^p$, we find that the matrix that is
inside the largest parenthesis of Equation~\eqref{Mkparent} is positive semidefinite,
from which we can conclude: $\frac{\partial f(\vec{w})}{\partial w_k}\geq0$.

Thanks to this property, we next show that
$f(\vec{w})\leq f(\vec{v})$, where $\vec{v}\in \mathbb{R}^s$ is defined by $v_i=\max_{k\in S} (w_k)$
if $i \in S$ and $v_i=w_i$ otherwise. Assume without loss of generality (after a reordering
of the coordinates) that $S=[s_0]$, $w_1\leq w_2 \leq \ldots \leq w_{s_0}$, and
denote the vector of the remaining components of $\vec{w}$ by $\vec{\bar{w}}$
(i.e., we have $\vec{w}^T=[w_1,\ldots,w_{s_0},\vec{\bar{w}}]$ and $\vec{v}^T=[w_{s_0},\ldots,w_{s_0},\vec{\bar{w}}]$).
The following inequalities hold:
$$f(\vec{w})
= f\left(\left[
\begin{array}{c}
w_1 \\w_2\\w_3\\ \vdots\\ w_{s_0} \\ \vec{\bar{w}}
\end{array}
\right]\right)
\leq f\left(\left[
\begin{array}{c}
w_2 \\w_2\\w_3\\ \vdots\\ w_{s_0} \\ \vec{\bar{w}}
\end{array}
\right]\right)
\leq f\left(\left[
\begin{array}{c}
w_3 \\w_3\\w_3\\ \vdots\\ w_{s_0} \\ \vec{\bar{w}}
\end{array}
\right]\right)
\leq \ldots \leq f\left(\left[
\begin{array}{c}
w_{s_0} \\w_{s_0}\\w_{s_0}\\ \vdots\\ w_{s_0} \\ \vec{\bar{w}}
\end{array}
\right]\right)
=f(\vec{v}).$$
The first inequality holds because $\frac{\partial f(\vec{w})}{\partial w_1}\geq0$
as long as $w_1\leq w_2$. To see that the second inequality holds, we apply the
same reasoning on the function $\tilde{f}: [w_2,w_3,\ldots] \mapsto f([w_2,w_2,w_3,\ldots])$,
i.e., we consider a variant of the problem where the matrices $M_1$ and $M_2$ have been replaced by a single matrix $M_1+M_2$.
The following inequalities are obtained in a similar manner.

Recall that we have set $M_S=\sum_{i \in S} M_i$. We have:
$$M_F(\vec{v}) = w_{s_0} M_S + \sum_{i \notin S} w_i M_i \succeq w_{s_0} M_S$$
and by isotonicity of the mapping $x \mapsto x^{1-p}$,
$M_F(\vec{v})^{1-p} \succeq (w_{s_0}\ M_S)^{1-p}$.

We denote by $X^\dagger$ the Moore-Penrose inverse of $X$.
 It is known~\cite{PS83} that
if $M_i\in\mathbb{S}_m^+$, the function $X\mapsto \trace(X^\dagger M_i)$ is nondecreasing
with respect to the L\"owner ordering over the set of matrices $X$ whose range contains
$M_i$. Hence, since $M_F(\vec{v})\succeq M_F(\vec{w})$ is invertible,
$$\forall i\in S, \quad \trace(M_F(\vec{v})^{p-1} M_i)=\trace \left(
\big( M_F(\vec{v})^{1-p} \big)^\dagger
M_i \right) \leq \trace \left( \big((w_{s_0}\ M_S)^{1-p}\big)^\dagger M_i\right)$$
and
\begin{align*}
f(\vec{v}) & = w_{s_0}^{1-p} \sum_{i\in S}  \trace(M_F(\vec{v})^{p-1} M_i) \\
&\leq w_{s_0}^{1-p} \sum_{i\in S} \trace \left( \big((w_{s_0}\ M_S)^{1-p}\big)^\dagger M_i\right) \\
& =\trace \big(M_S^{1-p}\big)^\dagger M_S\\
& = \trace M_S^p
\end{align*}
Finally, we have $f(\vec{w})\leq f(\vec{v}) \leq \trace M_S^p = \varphi_p(S),$
and the proof is complete.
\end{proof}

\end{document}